\theoremstyle{plain}
\newtheorem{thm}{Theorem}[section]
\newtheorem{cor}[thm]{Corollary}
\newtheorem{lem}[thm]{Lemma}
\newtheorem{prop}[thm]{Proposition}
\theoremstyle{definition}
\newtheorem{defn}[thm]{Definition}
\newtheorem{exam}[thm]{Example}
\newtheorem{rem}[thm]{Remark}
\numberwithin{equation}{section}
\newcommand{\mb}{\mathbb}
\newcommand{\mc}{\mathcal}
\newcommand{\lk}{\left}
\newcommand{\re}{\right}
 \def \b{\beta}  
 \def \T{\Theta}   
   \def \o{\omega}
\def \O{\Omega}
\begin{document}
	
	\title[ Directional weak mixing for $\mathbb{Z}^q$-actions ]{Directional weak mixing for $\mathbb{Z}^q$-actions}

	\author[C. Liu]{Chunlin Liu}
	\address{C. Liu: CAS Wu Wen-Tsun Key Laboratory of Mathematics, School of Mathematical Sciences, University of Science and Technology of China, Hefei, Anhui, 230026, P.R. China}
	
	\email{lcl666@mail.ustc.edu.cn}

	\subjclass[2020]{Primary  37A25, 37A35; Secondary  37A05}
	
	\keywords{Directional sequence entropy, Directional weak mixing, Directional Kronecker algebra.}
	\begin{abstract}
		In this paper, we introduce and characterize the concept of directional weak mixing through independence, sequence entropy, the mean ergodic theorem, and other notions. Additionally, we deduce a directional version of the Koopman-von Neumann spectrum mixing theorem. Furthermore, we explore the relation  between directional weak mixing and weak mixing.
	\end{abstract}

	\maketitle
	
	\section{Introduction}
	Various mixing properties play a crucial role in ergodic theory, and researchers have approached their study from diverse perspectives. Notably, characterizing weak mixing through sequence entropy has proven to be particularly effective and fruitful. The pioneering work was originated by  Ku\v shnirenko \cite{Kus}, who introduced measure-theoretical sequence entropy  for $\mathbb{Z}$-systems and proved that a $\mathbb{Z}$-system has discrete spectrum if and only if its sequence entropy is zero  with respect to any infinite subsequence of $\mathbb{Z}_+$. Later, Saleski \cite{Sa} gave a characterization of weak mixing  via sequence entropy. Hulse \cite{H,Hu1} improved some of the Saleski's results. Furthermore, Huang, Shao and Ye \cite{HSY} investigated kinds of mixing via sequence entropy. Zhang \cite{MR1152677,MR1208125} investigated mild mixing and mildly mixing extension via sequence entropy. Recently, Coronel, Maass and Shao \cite{CMS} studied Kronecker and rigid  algebras by sequence entropy. More recently, Liu and Yan \cite{Liu_2023} extended some of  above results to systems under amenable group actions.
	
	To explore the Cellular Automaton map together with the Bernoulli shift, Milnor \cite{M1,M} introduced directional entropy.  Subsequently, Park \cite{P} demonstrated the continuity of directional entropy for a $\mathbb{Z}^2$-action generated by a cellular automaton. With the deepening of research, mathematicians
	are not only focused on the directional entropy of cellular automaton but on more general systems as well. More properties were further studied in \cite{MR3551897,MR3681987,KP,liu2023directional,MR1355676,Park111,MR2755932,2022arXiv220306710R}. A natural question arises: Can directional sequence entropy be employed, similar to the $\mathbb{Z}$-system, to investigate the spectral and mixing properties along a specific direction? To investigate this question, Liu and Xu \cite{LX} introduced  notions of directional sequence entropy and directional discrete spectrum, and characterized directional discrete spectrum via directional sequence entropy.  Moreover,  they \cite{LX2} also characterized directional discrete spectrum by directional complexity. From the Koopman-von Neumann spectrum mixing theorem  \cite{KN}, it can be observed that in 
	$\mathbb{Z}$-systems,  discrete spectrum is the opposite extreme to weak mixing. Thus, based on ideas in \cite{LX},  in this paper, we introduce and study the notion of directional weak mixing.

	To be precise, throughout this paper, by a $\mathbb{Z}^2$-measure preserving dynamical system ($\mathbb{Z}^2$-m.p.s. for short) we mean a quadruple $(X,\mathcal{B}_X,\mu, T)$,
	where $X$ is a compact metric space, the $\mathbb{Z}^2$-action $T$ is a homeomorphism from the additive group $\mathbb{Z}^2$ to the group of homeomorphisms of $X$, $\mathcal{B}_X$ is the Borel $\sigma$-algebra of $X$ and $\mu$ is  an invariant Borel probability measure.  We denote the corresponding homeomorphism on $X$ by $T^{(m,n)}$ for any $(m,n)\in \mathbb{Z}^2$, so that $T^{(m_1,n_1)}\circ T^{(m_2,n_2)}=T^{(m_1+m_2,n_1+n_2)}$ and $T^{(0,0)} $ is the identity on $X$.
	The direction vector is denoted by  $\vec{v}=(1,\beta)\in \mb{R}^2$. We always assume that $\vec{v} \notin \mathbb{Q}$ because the cases where $\vec{v} \in \mathbb{Q}$ or $\vec{v} = (0,1)$ can be regarded as a $\mathbb{Z}$-m.p.s., yielding classical results. Moreover, our paper specifically concentrates on the $\mathbb{Z}^2$-m.p.s. However, we can verify that these results also hold for $\mathbb{Z}^q$-m.p.s., where $q$ is any integer greater than 2.
	
	We put $$\Lambda^{\vec{v}}(b)=\left\{(m,n)\in\mathbb{Z}^2:\beta m-b\leq n\leq \beta m+b\right\},$$
	write $\Lambda_+^{\vec{v}}(b)=\Lambda^{\vec{v}}(b)\cap (\mathbb{Z}_+\times \mathbb{Z})$  and $\Lambda_k^{\vec{v}}(b)=\Lambda^{\vec{v}}(b)\cap ([0,k-1]\times \mathbb{Z})$ for each $k\in\mathbb{N}$.
	Define the directional Kronecker algebra along the direction $\vec{v}$ by
	$$\mathcal{K}_\mu^{\vec{v}}=\left\{B\in\mathcal{B}_X: \overline{\lk\{U_T^{(m,n)}1_B :(m,n)\in \Lambda^{\vec{v}}(b) \re\}}\text{ is compact in } L^2(X,\mathcal{B}_X,\mu) \right\},$$
	where $U_T^{(m,n)}:L^2(X,\mathcal{B}_X,\mu)\to L^2(X,\mathcal{B}_X,\mu)$ is the unitary operator such that
	$$U_T^{(m,n)}f=f\circ T^{(m,n)}\text{ for all }f\in L^2(X,\mathcal{B}_X,\mu)$$ and $1_B$ is the indicator function of $B\in \mathcal{B}_X$.
	Note that the definition of $\mathcal{K}_\mu^{\vec{v}}$ is independent of the choice of $b\in(0,\infty)$ (see \cite[Proposition 3.1]{LX}). Analogous to $\mathbb{Z}$-actions, we call a $\mathbb{Z}^2$-m.p.s. $(X ,\mathcal{B}_X, \mu, T)$ $\vec{v}$-weak mixing if $\mathcal{K}_\mu^{\vec{v}}=\{X,\emptyset\}$. The following result is a characterization of directional weak mixing   via independence.
	\begin{thm}
		For any $b\in(0,\infty)$, the following statements are equivalent.
		\begin{itemize}
			\item [(a)] $(X,\mathcal{B}_X, \mu, T)$ is $\vec{v}$-weakly mixing.
			\item [(b)]  For any $f\in L^2(X,\mathcal{B}_X,\mu)$, $$\lim_{k\rightarrow \infty}\frac{1}{\#(\Lambda_k^{\vec{v}}(b))}\sum_{(m,n)\in\Lambda_k^{\vec{v}}(b)}|\langle U_T^{(m,n)}f,f\rangle-\langle f,1\rangle\langle 1,f\rangle|=0.$$
			\item [(c)] 
			For any $f,g\in L^2(X,\mathcal{B}_X,\mu)$, $$\lim_{k\rightarrow \infty}\frac{1}{\#(\Lambda_k^{\vec{v}}(b))}\sum_{(m,n)\in\Lambda_k^{\vec{v}}(b)}|\langle U_T^{(m,n)}f,g\rangle-\langle f,1\rangle\langle1,g\rangle|=0.$$
		\end{itemize}
	\end{thm}
	
	Next, we characterize directional weak mixing  via directional sequence entropy. Let us begin with some notations.
	For a finite measurable partition $\alpha$ of $X$, let $H_{\mu}(\alpha)=-\sum_{A\in \alpha}\mu(A)\log{\mu(A)}.$ 
	The following result can be found in Page 94 of \cite{Peter}.
	\begin{lem}\label{lem-2}Let $(X,\mathcal{B}_X,\mu)$ be a Borel probability space and $r\geq 1$ be a fixed integer. For each $\epsilon >0$, there exists  $\delta=\delta(\epsilon,r)>0$ such that if  $\alpha=\{A_1,A_2,\ldots,A_r\}$ and $\eta=\{B_1,B_2,\ldots,B_r\}$ are any two finite measurable partitions of $(X,\mathcal{B}_X,\mu)$ with $\sum_{j=1}^r\mu(A_j\Delta B_j)< \delta$ then $H_\mu(\alpha|\eta)+H_\mu(\eta|\alpha)<\epsilon$.
	\end{lem}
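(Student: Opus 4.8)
The plan is to prove the estimate directly, by bounding each of the two conditional entropies above by a quantity that depends only on $\delta$ and $r$ and tends to $0$ as $\delta\to 0^+$, and then choosing $\delta$ accordingly. Since the hypothesis $\sum_{j=1}^r\mu(A_j\Delta B_j)<\delta$ is symmetric in $\alpha$ and $\eta$, it suffices to bound $H_\mu(\alpha|\eta)$; the bound for $H_\mu(\eta|\alpha)$ then follows by interchanging the roles of the two partitions.

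First I would write $H_\mu(\alpha|\eta)=\sum_{j=1}^r\mu(B_j)H(\alpha|B_j)$, where $H(\alpha|B_j)$ denotes the entropy of the probability vector $(\mu(A_i\cap B_j)/\mu(B_j))_{i=1}^r$ (discarding any $B_j$ of zero measure). The key observation is that closeness of the two partitions forces the mass inside each $B_j$ to concentrate on the diagonal cell $A_j\cap B_j$: indeed, for $i\neq j$ one has $A_i\cap B_j\subseteq B_j\setminus A_j$, so the total off-diagonal mass satisfies
$$\sum_{j=1}^r\sum_{i\neq j}\mu(A_i\cap B_j)=\sum_{j=1}^r\mu(B_j\setminus A_j)\le\sum_{j=1}^r\mu(A_j\Delta B_j)<\delta.$$

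Next I would apply the grouping (refinement) property of entropy inside each $B_j$, splitting the relative partition into the single diagonal cell $A_j\cap B_j$ and the union of the off-diagonal cells. This gives $H(\alpha|B_j)\le H_2(s_j)+s_j\log r$, where $s_j=1-\mu(A_j\cap B_j)/\mu(B_j)$ is the off-diagonal fraction in $B_j$, the binary entropy is $H_2(t)=-t\log t-(1-t)\log(1-t)$, and $\log r$ bounds the entropy of the at most $r-1$ off-diagonal cells. Summing against $\mu(B_j)$, the second term contributes at most $\bigl(\sum_j\mu(B_j)s_j\bigr)\log r<\delta\log r$, using the off-diagonal mass estimate above. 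For the first term I would invoke concavity of $H_2$ together with Jensen's inequality for the weights $\mu(B_j)$ (which sum to $1$), obtaining $\sum_j\mu(B_j)H_2(s_j)\le H_2\bigl(\sum_j\mu(B_j)s_j\bigr)\le H_2(\delta)$ once $\delta\le 1/2$, since $H_2$ is increasing on $[0,1/2]$. Altogether $H_\mu(\alpha|\eta)\le H_2(\delta)+\delta\log r$, and by the symmetry noted above the same quantity bounds $H_\mu(\eta|\alpha)$.

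Finally, since $H_2(\delta)+\delta\log r\to 0$ as $\delta\to 0^+$, given $\epsilon>0$ I would fix $\delta=\delta(\epsilon,r)\in(0,1/2]$ small enough that $2\bigl(H_2(\delta)+\delta\log r\bigr)<\epsilon$, which yields $H_\mu(\alpha|\eta)+H_\mu(\eta|\alpha)<\epsilon$ as required. The one point needing care, and the step I expect to be the main obstacle, is ensuring the bound is \emph{uniform} over all pairs of $r$-cell partitions rather than depending on the specific cells: this is exactly what the $\log r$ factor accomplishes (controlling the off-diagonal entropy no matter how the small off-diagonal mass is distributed among the $r-1$ cells) and what the Jensen step accomplishes (converting per-cell concentration into a single aggregate estimate in terms of $\delta$).
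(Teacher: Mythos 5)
The paper itself contains no proof of this lemma: it is quoted directly from Walters' book \cite[Page 94]{Peter} and used as a black box, so there is no internal argument to compare against. Your proof is correct and self-contained, and every step checks out: (i) the identity $\sum_j\mu(B_j)s_j=\sum_j\mu(B_j\setminus A_j)\le\sum_j\mu(A_j\Delta B_j)<\delta$ correctly converts the hypothesis into a bound on the aggregate off-diagonal mass; (ii) the grouping inequality $H(\alpha|B_j)\le H_2(s_j)+s_j\log(r-1)\le H_2(s_j)+s_j\log r$ is the standard chain rule applied to the two-block split into the diagonal cell $A_j\cap B_j$ and the union of the off-diagonal cells, and this $\log r$ factor is precisely where uniformity over all $r$-cell partitions enters; (iii) Jensen's inequality for the concave function $H_2$ with weights $\mu(B_j)$, together with monotonicity of $H_2$ on $[0,1/2]$, legitimately gives $\sum_j\mu(B_j)H_2(s_j)\le H_2(\delta)$ once $\delta\le 1/2$; and the symmetry reduction is valid because the hypothesis is symmetric in $\alpha$ and $\eta$ (degenerate cases, such as $r=1$ or cells of zero measure, are harmless). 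Compared with the usual textbook treatments (Walters' argument, or the common variant that estimates $H_\mu(\alpha|\eta)\le H_\mu(\tau)+H_\mu(\alpha|\eta\vee\tau)$ with $\tau=\{D,D^c\}$, $D=\bigcup_i(A_i\cap B_i)$), your proof runs the same mechanism but per atom of $\eta$, aggregated by Jensen; a concrete benefit is the explicit quantitative modulus $H_\mu(\alpha|\eta)+H_\mu(\eta|\alpha)\le 2\bigl(H_2(\delta)+\delta\log r\bigr)$, so that $\delta(\epsilon,r)$ can be written down in closed form rather than extracted from a continuity or compactness argument.
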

	For any infinite sequence $S=\{(m_i,n_i)\}_{i=1}^{\infty}$ of $\Lambda^{\vec{v}}(b)$, we put
	$$ h^S_{\mu}(T,\alpha)= \limsup_{k\to \infty} \frac{1}{k}H_\mu \lk(\bigvee_{i=1}^k T^{-(m_i,n_i)} \alpha\re).$$
	Then we can define the directional sequence entropy of $T$ for the infinite subset $S$ by
	$$h^S_{\mu}(T)=\sup_{\alpha}h^S_{\mu}(T,\alpha),$$
	where the supremum is taken over all finite measurable partitions $\alpha$ of $X$. 
	Motivated by Saleski's work \cite{Sa}, we characterize directional weak mixing  by directional sequence entropy.
	\begin{thm}
		For any $b\in(0,\infty)$, the following three conditions are equivalent.
		\begin{itemize}
			\item[(a)]	$(X,\mathcal{B}_X,\mu,T)$ is  $\vec{v}$-weakly mixing.	
			\item[(b)]Given nontrivial finite measurable partition $\alpha$ of $X$, there exists an infinite sequence $S=\{(m_i,n_i)\}_{i=1}^{\infty}$ of $\Lambda^{\vec{v}}(b)$ such that  $h^{S}_{\mu}(T,\alpha)>0.$
		\end{itemize}
	\end{thm}
	Moreover, we can choose a common subsequence of $\Lambda_+^{\vec{v}}(b)$ for all finite measurable partitions.
	\begin{thm}
		For any $b\in(0,\infty)$, the following two conditions are equivalent.
		\begin{itemize}
			\item[(a)]	$(X,\mathcal{B}_X,\mu,T)$ is  $\vec{v}$-weakly mixing.	
			\item[(b)]There exists an infinite sequence $S=\{(m_i,n_i)\}_{i=1}^{\infty}$ of $\Lambda^{\vec{v}}(b)$ such that  $h^{S}_{\mu}(T,\alpha)=H_{\mu}(\alpha)$
			for any finite measurable partition $\alpha$ of $X$.
		\end{itemize}
	\end{thm}

	To further investigate the properties of directional weak mixing, we introduce directional weak mixing functions. A function $f\in L^2(X,\mathcal{B}_X,\mu)$ is said to be $\vec{v}$-weakly mixing, if $\mathbb{E}(f|\mathcal{K}_\mu^{\vec{v}})(x)=0$ for $\mu$-a.e. $x\in X$. Denote by $WM^{\mu,\vec{v}}(X)$ the set of $\vec{v}$-weakly mixing functions for $\mu$, and we obtain a  directional version of Koopman-von Neumann spectrum mixing theorem \textbf{(see Theorem \ref{thm8})}:
	$$L^2(X,\mathcal{B}_X,\mu)= WM^{\mu,\vec{v}}(X)\bigoplus L^2(X,\mathcal{K}_\mu^{\vec{v}},\mu).$$
	Moreover, we provide a characterization of weakly mixing functions \textbf{(see Theorem \ref{3.10})}. Namely
	\begin{equation*}
		\begin{split}
			&WM^{\mu,\vec{v}}(X)=\{f\in L^2(X,\mathcal{B}_X,\mu):\\
			&\qquad\qquad \text{ for any }g\in L^2(X,\mathcal{B}_X,\mu), \lim_{k\to \infty}\frac{1}{\#(\Lambda_k^{\vec{v}}(b))}\sum_{(m,n)\in\Lambda_k^{\vec{v}}(b)}|\langle U_T^{(m,n)}f,g\rangle|=0\}.
		\end{split}
	\end{equation*}
	The mean ergodic theorem plays an important role in ergodic theory. Consequently, we also characterize directional weak mixing  through the application of a  directional version of mean ergodic theorem.
	For convenience, we introduce the following notations.
	Let $Q$ be an infinite subset of $\Lambda_+^{\vec{v}}(b)$, and define the upper  and lower density of $Q$ in $\Lambda_+^{\vec{v}}(b)$, respectively, by
	$$\overline{D}_b(Q)=\limsup_{k\to\infty}\frac{\#(Q\cap([0,k-1]\times \mathbb{Z}))}{\#(\Lambda_k^{\vec{v}}(b))}$$
	and
	$$\underline{D}_b(Q)=\liminf_{k\to\infty}\frac{\#(Q\cap([0,k-1]\times \mathbb{Z}))}{\#(\Lambda_k^{\vec{v}}(b))}.$$
	If $\overline{D}_b(Q)=\underline{D}_b(Q)$, we say $Q$ has the density, and the common value is denoted by $D_b(Q)$.
	\begin{thm}
		For any $b\in(0,\infty)$, the following conditions are equivalent. 
		\begin{itemize}
			\item[(a)]	$(X,\mathcal{B}_X,\mu,T)$ is $\vec{v}$-weakly mixing.	
			\item[(b)]For any infinite subset $Q=\{(m_i,n_i)\}_{i=1}^\infty$ of $\Lambda^{\vec{v}}(b)$  with $\underline{D}_b(Q)>0$,
			one has 
			\begin{equation*}
				\lim_{N\to \infty}\|\frac{1}{N}\sum_{i=1}^{N}U_T^{(m_i,n_i)}f-\int_{X}fd\mu\|_2=0
			\end{equation*}
			for all $f\in L^2(X,\mathcal{B}_X,\mu)$.
		\end{itemize}
	\end{thm}

	\medskip
	The structure of the paper is as follows. In Section \ref{Pre}, we recall some basic notions that we use in this paper. In Section \ref{sec:weak mixing}, we introduce  directional weak mixing, and characterize it through various facets.  In Section 4, we provide a directional version of the mean ergodic theorem.
	In Section \ref{sec:relation}, we discuss the relation between weak mixing and directional weak mixing.

	\section{Preliminaries}\label{Pre}
	In this section we recall some notations and results that are used later.
	\subsection{General notions.}
	In this paper, the sets of real numbers, rational numbers, integers, non-negative integers, and natural numbers are denoted by $\mathbb{R}$, $\mathbb{Q}$, $\mathbb{Z}$, $\mathbb{Z}_+$, and $\mathbb{N}$, respectively. We use $\overline{E}$ to denote the closure of a subset $E$ of $X$ and $|f|_p$ to denote the $L^p$-norm of a function $f$ defined in a Borel probability measure space $(X, \mathcal{B}_X, \mu)$, where
	$$ \|f\|_p=  
	(\int_{X}|f|^pd\mu)^{\frac{1}{p}}, \text{ if }  1\leq p<\infty \text{ and } \|f\|_{\infty}=\inf\{a\geq 0:\mu\left(\{x:|f(x)|>a\}\right)=0\}.$$

	\subsection{Kronecker algebra and discrete spectrum.}
	\subsubsection{Kronecker algebra and discrete spectrum for $\mathbb{Z}$-actions.}
	The content in this subsection can be found in many books, such as \cite{HostKrabook}. Let $(X,\mathcal{B}_X,\mu,T)$ be a $\mathbb{Z}$-m.p.s. and $\mathcal{H}=L^2(X,\mathcal{B}_X,\mu)$. In the complex Hilbert space $\mathcal{H}$, we define the unitary operator $U_T:\mathcal{H}\rightarrow \mathcal{H}$ by $U_Tf=f\circ T$, for any $f\in\mathcal{H}$.  We say that $f$ is an almost periodic function if $\overline{\left\{U_T^n f:n\in \mathbb{Z}\right\}}$ is a compact subset of $\mathcal{H}$.
	It is well known that the set of all bounded almost periodic functions forms a $U_T$-invariant and conjugation-invariant subalgebra of $\mathcal{H}$ (denoted by $\mathcal{A}_c$). The set of almost periodic functions is just the closure of $\mathcal{A}_c$ (denoted by $\mathcal{H}_c$). By \cite[Theorem 1.2]{Zi}, there exists a $T$-invariant sub-$\sigma$-algebra $\mathcal{K}_{\mu}$ of $\mathcal{B}_X$ such that $\mathcal{H}_c=L^2(X,\mathcal{K}_{\mu},\mu)$. The sub-$\sigma$-algebra $\mathcal{K}_{\mu}$ is called the Kronecker algebra of $(X,\mathcal{B}_X,\mu,T)$.    It is easy to know that $\mathcal{K}_\mu$ consists of all $B\in \mathcal{B}_X$ such that
	$\overline{\{ U_T^n1_B:n\in \mathbb{Z} \}} \text{ is compact in }L^2(X,\mathcal{B}_X,\mu).$
	We say $\mu$ has discrete spectrum if $\mathcal{B}_X=\mathcal{K}_{\mu}$. 	The following result is classical.
	\begin{lem}\label{lem5}
		Let  $(X_1,\mathcal{B}_{X_1},\mu,T_1)$ and $(X_2,\mathcal{B}_{X_2},\nu,T_2)$ be two $\mathbb{Z}$-m.p.s. Then $$\mathcal{K}_{\mu\times \nu}(X_1\times X_2)=\mathcal{K}_{\mu}(X_1)\times \mathcal{K}_{\nu}(X_2).$$
	\end{lem}
	
	\subsubsection{Directional Kronecker algebra and discrete spectrum.}
	In this subsection, we recall the directional Kronecker algebra and discrete spectrum introduced by Liu and Xu  \cite{LX}.
	Let $(X,\mathcal{B}_X,\mu,T)$ be a $\mathbb{Z}^2$-m.p.s., $\vec{v}=(1,\beta)\in\mathbb{R}^2$ be a direction vector and $b\in (0,\infty)$. 
	Set $$\Lambda^{\vec{v}}(b)=\left\{(m,n)\in\mathbb{Z}^2:\beta m-b/2\leq n\leq \beta m+b/2\right\}.$$
	Let $\mathcal{A}_c^{\vec{v}}(b)$ be the collection of $f\in \mathcal{H}=L^2(X,\mathcal{B}_X,\mu)$ such that
	$$\overline{\left\{U_T^{(m,n)}f:(m,n)\in \Lambda^{\vec{v}}(b) \right\}}\text{ is compact in }L^2(X,\mathcal{B}_X,\mu).$$
	It is easy to see that  $\mathcal{A}_c^{\vec{v}}(b)$ is a $U_{T^{\vec{w}}}$-invariant for any $\vec{w}$ in $\mathbb{Z}^2$ and conjugation-invariant subalgebra of $\mathcal{H}$. It follows from  \cite[Theorem 1.2]{Zi} that there exists a $T$-invariant sub-$\sigma$-algebra  $\mathcal{K}_\mu^{\vec{v}}(b)$  of $\mathcal{B}_X$ such that
	\begin{align}\label{1}\mathcal{A}_c^{\vec{v}}(b)=L^2(X,\mathcal{K}_\mu^{\vec{v}}(b),\mu).\end{align}
	Directly from \eqref{1}, the $\vec{v}$-directional Kronecker algebra of $(X ,\mathcal{B}_X, \mu, T)$ can be defined by
	$$\mathcal{K}_\mu^{\vec{v}}(b)=\left\{B\in\mathcal{B}_X: \overline{\left\{U_T^{(m,n)}1_B :(m,n)\in \Lambda^{\vec{v}}(b) \right\}}\text{ is compact in } L^2(X,\mathcal{B}_X,\mu) \right\}.$$
	By\cite[Proposition 3.1]{LX}, the definition of $\mathcal{K}_\mu^{\vec{v}}(b)$ is independent of the selection of $b\in (0,\infty)$. So we omit $b$ in $\mathcal{K}_\mu^{\vec{v}}(b)$ and write it as $\mathcal{K}_\mu^{\vec{v}}$.
	We say $\mu$ has $\vec{v}$-discrete spectrum if $\mathcal{K}_\mu^{\vec{v}}=\mathcal{B}_X$.
	
	Motivated by Park \cite{P}, Liu and Xu \cite[Lemma 4.4]{liu2023directional} introduced a skew product to investigate  directional Pinsker algebra. Let $\O=[0,1)$, $\mathcal{C}$
	be the Borel  $\sigma$-algebra on $\O$, and $m$ be the Lebesgue measure on $\O$. As $\O$ is a compact Abelian group, we always use the bi-invariant metric $\rho$ on it. Let $R_\b$ be the rotation on $\O$, i.e., $R_\b t=t+\b\pmod 1$ for any $t\in\O$, where $\b$ is the irrational direction.  Define a skew product system $(\O\times X,\mathcal{C}\times \mathcal{B}_X,m\times \mu, \T)$ by 
	\begin{equation}\label{eq:skew product}
		\T:\O\times X\to \O\times X,\text{ }(\o,x)\mapsto (R_\b t,\varphi(1,\o)x),
	\end{equation}
	where $\varphi(1,\o)=T^{(1,[\b+t])}$, and $[a]$ is the largest natural number less than $a\in\mathbb{R}$. Define a metric on $\O\times X$ by 
	\[\widetilde{d}((t,x),(s,y)):=\max\{\rho(t,s),d(x,y)\}\text{ for any }(t,x),(s,y)\in \O\times X.\]
	For convenience, we denote \[\widetilde{X}=\O\times X,\text{ }\widetilde{\mathcal{B}}=\mathcal{C}\times \mathcal{B}_X,\text{ and }\widetilde{\mu}=m\times \mu.\] 
	Let $\mathcal{K_{\widetilde{\mu}}}$ be the Kronecker algebra of $(\widetilde{X},\widetilde{\mathcal{B}},\widetilde{\mu},\T).$  By a proof similar to that of  \cite[Lemma 4.4, Step 1]{LX}, we have the following result.
	\begin{lem}\label{lem6}
		$\mathcal{K_{\widetilde{\mu}}}=\mathcal{C}\times\mathcal{K}^{\vec{v}}_{\mu}$.
	\end{lem}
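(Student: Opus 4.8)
The plan is to relate the dynamics of the suspension-type system $(\widetilde{X},\widetilde{\mathcal{B}},\widetilde{\mu},W)$ directly to the directional dynamics of $(X,\mathcal{B}_X,\mu,T)$ along $\vec{v}$, and then to match up the two compactness conditions that define the respective Kronecker algebras. Concretely, I would first analyze the iterates $W^n$. Since $W^n=\phi_{n,n\beta}$, an induction on $n$ shows that $W^n(x,u,v)=(T^{([n+u],[n\beta+v])}x,\,n+u-[n+u],\,n\beta+v-[n\beta+v])$, so the $X$-coordinate of $W^n(x,u,v)$ is governed by the lattice point $(m_n(u),n_n(v))=([n+u],[n\beta+v])\in\mathbb{Z}^2$. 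The key observation is that as $n$ ranges over $\mathbb{Z}$ and $(u,v)$ over $[0,1)^2$, these points $(m_n(u),n_n(v))$ sweep out exactly a tube around the line $\mathbb{R}\vec{v}$, i.e.\ they land in $\Lambda^{\vec{v}}(b)$ for an appropriate fixed width $b$ (and conversely every point of $\Lambda^{\vec{v}}(b)$ arises this way). This is the bridge between the $\mathbb{Z}$-action $W$ and the directional $\Lambda^{\vec{v}}(b)$-orbit of $T$.

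Next I would prove the two inclusions defining the equality $\mathcal{K}_{\widetilde{\mu}}=\mathcal{K}^{\vec{v}}_\mu\times\mathcal{C}$. Recall $\mathcal{K}_{\widetilde{\mu}}$ consists of those $\widetilde{B}\in\widetilde{\mathcal{B}}$ for which $\overline{\{U_W^n 1_{\widetilde{B}}:n\in\mathbb{Z}\}}$ is compact in $L^2(\widetilde{X},\widetilde{\mathcal{B}},\widetilde{\mu})$, while $\mathcal{K}^{\vec{v}}_\mu\times\mathcal{C}$ is generated by sets $B\times D$ with $\overline{\{U_T^{(m,n)}1_B:(m,n)\in\Lambda^{\vec{v}}(b)\}}$ compact in $L^2(X,\mathcal{B}_X,\mu)$ and $D\in\mathcal{C}$. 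For the inclusion $\mathcal{K}^{\vec{v}}_\mu\times\mathcal{C}\subseteq\mathcal{K}_{\widetilde{\mu}}$, I would take a generating function $f(x)g(u,v)$ with $f\in L^2(X,\mathcal{K}^{\vec{v}}_\mu,\mu)$ almost periodic along $\Lambda^{\vec{v}}(b)$ and $g\in L^2([0,1)^2,\mathcal{C},m)$, and show that $U_W^n(f\otimes g)$ takes the form $(U_T^{(m_n,n_n)}f)\otimes(g\circ R_n)$ where $R_n$ is the relevant translation/remainder map on $[0,1)^2$; the first tensor factor ranges in a precompact set by hypothesis, and the rotation-type orbit on the torus-like factor is handled by a standard equicontinuity/Fourier argument, so the product orbit is precompact. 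For the reverse inclusion, I would use the conditional-expectation characterization together with Proposition \ref{e1}: given $\widetilde{B}\in\mathcal{K}_{\widetilde{\mu}}$, its indicator lies in the almost periodic part, and projecting onto the two factors via $\mathbb{E}(\cdot|\mathcal{K}^{\vec{v}}_\mu\times\mathcal{C})$ and showing this projection is the identity on $\mathcal{K}_{\widetilde{\mu}}$ forces $\widetilde{B}$ into $\mathcal{K}^{\vec{v}}_\mu\times\mathcal{C}$.

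The main obstacle I anticipate is the bookkeeping that turns ``$W^n$-orbit precompactness'' into ``$\Lambda^{\vec{v}}(b)$-orbit precompactness,'' because the correspondence $n\mapsto(m_n(u),n_n(v))$ depends on the base point $(u,v)\in[0,1)^2$, so a single $W$-orbit really encodes a whole family of directional $T$-orbits simultaneously, integrated over the fibre. The care needed is to show that precompactness of $\{U_W^n1_{\widetilde{B}}\}$ in the \emph{product} $L^2$ is equivalent to uniform-in-fibre precompactness of the directional $T$-orbits, and that the fibre coordinate contributes only an almost periodic (hence harmless) rotation factor. Since Remark \ref{R1} guarantees the independence of $\mathcal{K}^{\vec{v}}_\mu$ from $b$, I am free to choose the width $b$ large enough to accommodate the spread of the integer parts $[n+u]$ and $[n\beta+v]$ as $(u,v)$ varies, which removes any edge effects and makes the two tube descriptions coincide. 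With that flexibility in hand, the two inclusions close up and the equality follows.
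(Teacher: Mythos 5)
First, a point of comparison: this paper contains no proof of Lemma \ref{lem6} at all --- it is imported verbatim from \cite[Lemma 4.4, Step 1]{LX} --- so your proposal can only be judged on its own merits, not against an internal argument. Your reduction of the dynamics is correct: since $[n+u]=n$ for $u\in[0,1)$, one has $W^n(x,u,v)=(T^{(n,[n\beta+v])}x,\,u,\,\{n\beta+v\})$, and the lattice points that occur, namely $(n,[n\beta])$ and $(n,[n\beta]+1)$, all lie in $\Lambda^{\vec{v}}(b)$ once $b\geq 2$; Remark \ref{R1} then removes any dependence on the width. The inclusion $\mathcal{K}^{\vec{v}}_{\mu}\times\mathcal{C}\subseteq\mathcal{K}_{\widetilde{\mu}}$ also goes through essentially as you sketch, with one correction: $U_W^n(f\otimes g)$ is \emph{not} a pure tensor $(U_T^{(m_n,n_n)}f)\otimes(g\circ R_n)$, because the exponent $[n\beta+v]$ depends on $v$; it is a sum of two tensor-type terms, $U_T^{(n,[n\beta])}f$ appearing on $\{v<1-\{n\beta\}\}$ and $U_T^{(n,[n\beta]+1)}f$ on $\{v\geq 1-\{n\beta\}\}$. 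Precompactness survives this splitting (extract limits of the two $U_T$-factors, of the rotated $g$, and of the cut point $1-\{n\beta\}$, using absolute continuity of the integral), so this half is a repairable slip rather than a fatal error.

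The genuine gap is the reverse inclusion $\mathcal{K}_{\widetilde{\mu}}\subseteq\mathcal{K}^{\vec{v}}_{\mu}\times\mathcal{C}$, which is the substantive half of the lemma. Your plan for it --- project by $\mathbb{E}(\cdot|\mathcal{K}^{\vec{v}}_{\mu}\times\mathcal{C})$ and ``show this projection is the identity on $\mathcal{K}_{\widetilde{\mu}}$'' --- is a restatement of the statement to be proved, not an argument, and Proposition \ref{e1} cannot supply it: that proposition computes conditional expectations of \emph{product} functions only, whereas a general $\widetilde{B}\in\mathcal{K}_{\widetilde{\mu}}$ is not a product set. What is missing is a mechanism converting precompactness of $\{U_W^n1_{\widetilde{B}}:n\in\mathbb{Z}\}$ into product-measurability. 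Since $\mathcal{C}$ is the full Borel algebra of the fibre, $\mathbb{E}(\cdot|\mathcal{K}^{\vec{v}}_{\mu}\times\mathcal{C})$ acts fibrewise, so the natural route is to show that $F:=1_{\widetilde{B}}-\mathbb{E}(1_{\widetilde{B}}|\mathcal{K}^{\vec{v}}_{\mu}\times\mathcal{C})$, whose fibres satisfy $\mathbb{E}(F_{(u,v)}|\mathcal{K}^{\vec{v}}_{\mu})=0$, is weakly mixing for $W$ in the Ces\`aro sense of Lemma \ref{thm9}; then $F$ would be simultaneously almost periodic (by the first inclusion and the hypothesis on $\widetilde{B}$) and weakly mixing, hence $0$. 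But establishing that Ces\`aro decay from the fibre condition is precisely the hard analytic step, and beware of circularity: the statements in this paper that would deliver it (Theorem \ref{thm13}, Theorem \ref{3.10}) are themselves deduced \emph{from} Lemma \ref{lem6}, so they cannot be invoked here. An alternative is a direct fibrewise transfer of precompactness using rigidity times of the rotation ($\{n_k\beta\}\to 0$), but that too requires a careful argument, since the $W$-orbit over the fibre at $(u,v)$ involves the fibres of $\widetilde{B}$ at rotated positions $(u,\{n\beta+v\})$, not at $(u,v)$ itself. Neither mechanism appears in your proposal, so as written it proves only the easy inclusion.
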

	
	Applying Lemma \ref{lem5} and  Lemma \ref{lem6}, we have the following result.
	\begin{lem}\label{lem7}
		Let  $(X_1,\mathcal{B}_{X_1},\mu,T_1)$ and $(X_2,\mathcal{B}_{X_2},\nu,T_2)$ be two $\mathbb{Z}^2$-m.p.s. Then for any direction vector $\vec{v}=(1,\beta)\in\mathbb{R}^2$, one has $$\mathcal{K}^{\vec{v}}_{\mu\times \nu}(X_1\times X_2)=\mathcal{K}^{\vec{v}}_{\mu}(X_1)\times \mathcal{K}^{\vec{v}}_{\nu}(X_2).$$
	\end{lem}
	As a corollary of Lemma \ref{lem7}, the product system of two systems with directional discrete spectrum along the same direction also has directional discrete spectrum along this direction.
	
	\subsection{Weak mixing for $\mathbb{Z}$-actions.} Given a $\mathbb{Z}$-m.p.s. $(X,\mathcal{B}_X,\mu,T)$,  $\mu$ is called weak mixing if $\mathcal{K}_\mu=\{X,\emptyset\}$, and  a function $f\in L^2(X,\mathcal{B}_X,\mu)$ is said to be weakly mixing if $\mathbb{E}(f|\mathcal{K}_{\mu})(x)=0$ for $\mu$-a.e. $x\in X$. Denote by $WM^{\mu}(X)$ the set of weakly mixing functions for $\mu$.
	Then we have the following characterization of weakly mixing systems via weakly mixing functions.
	\begin{prop}
		$\mu$ is  weakly mixing if and only if each $f\in L^2(X,\mathcal{B}_X,\mu)$ with mean zero is weakly mixing. 
	\end{prop}
	Let $S\subset \mathbb{Z}_+$ be an infinite sequence. We define the upper  and lower density of $S$ in $\mathbb{Z}_+$ by
	$$\overline{d}(S)=\limsup\limits_{n\to \infty}\frac{1}{n}\#(S\cap\{0,1,\ldots, n-1\})$$
	and
	$$\underline{d}(S)=\liminf\limits_{n\to \infty}\frac{1}{n}\#(S\cap\{0,1,\ldots, n-1\}).$$ 
	If $\overline{d}(S)=\underline{d}(S)=d$, we say the sequence $S$ has the density $d$.
	
	We recall the Koopman-von Neumann spectrum mixing theorem \cite{KN}.
	\begin{lem}\label{thm9}
		The Hilbert space $\mathcal{H}:=L^2(X,\mathcal{B}_X,\mu)$ can be decomposed as $$\mathcal{H}=L^2(X,\mathcal{K}_{\mu},\mu)\bigoplus WM^{\mu}(X).$$
		Moreover, one has
		\begin{equation*}
			\begin{split}
				W&M^{\mu}(X)\\
				&=\{f\in \mathcal{H}:\exists S\subset \mathbb{N},\text{ }d(S)=1\text{ such that } \text{for all } g\in \mathcal{H},\lim\limits_{n\to \infty\atop n\in S}\langle U_T^nf,g\rangle=0\},
			\end{split}
		\end{equation*}
		where $d(S)$ is the density of $S$ and $ \langle f,g\rangle=\int_X f(x)\overline{g(x)}dx  $ is an inner product on $\mathcal{H}$.
	\end{lem}

	\section{Directional weak mixing}\label{sec:weak mixing}
	In this section, we introduce the notion of directional weak mixing  and provide many characterizations of it.
	\begin{defn}
		Let  $(X,\mathcal{B}_X,\mu,T)$ be a $\mathbb{Z}^2$-m.p.s. and $\vec{v}=(1,\beta)\in\mathbb{R}^2$ be a direction vector. The system $(X,\mathcal{B}_X,\mu,T)$ is $\vec{v}$-weakly mixing if  $\mathcal{K}_\mu^{\vec{v}}=\{X,\emptyset\}$.
	\end{defn}
	\begin{rem}
		The definition of directional weak mixing is independent on the choice of $b\in(0,\infty)$.
	\end{rem}
	With the help of Lemma \ref{lem7}, we obtain the following consequence by the definition of directional weak mixing.
	\begin{prop}
		If two $\mathbb{Z}^2$-m.p.s. $(X_1,\mathcal{B}_{X_1},\mu,T_1)$ and $(X_2,\mathcal{B}_{X_2},\nu,T_2)$ are $\vec{v}$-weakly mixing along a directional vector $\vec{v}=(1,\beta)\in\mathbb{R}^2$,  so does $(X_1\times X_2,\mathcal{B}_{X_1}\times \mathcal{B}_{X_2},\mu\times\nu, T_1\times T_2)$.
	\end{prop}
	
	\subsection{Characterization of directional weak mixing by independence}
	\begin{thm}\label{thm13}
		Let  $(X,\mathcal{B}_X,\mu,T)$ be a $\mathbb{Z}^2$-m.p.s., $\vec{v}=(1,\beta)\in\mathbb{R}^2$ be a direction vector and $b\in(0,\infty)$. Then the following two statements are equivalent.
		\begin{itemize}
			\item [(a)] $(X,\mathcal{B}_X, \mu, T)$ is $\vec{v}$-weakly mixing.
			\item [(b)] For any $B,C\in \mathcal{B}_X$, $$\lim_{k\rightarrow \infty}\frac{1}{\#(\Lambda_k^{\vec{v}}(b))}\sum_{(m,n)\in\Lambda_k^{\vec{v}}(b)}|\langle U_T^{(m,n)}1_B,1_C\rangle-\mu(B)\mu(C)|=0.$$
		\end{itemize}
	\end{thm}
	\begin{proof}
		(b)$\Rightarrow$(a). Suppose to the contradiction that there exists $B\in \mathcal{B}_X$ with $0<\mu(B)<1$ and $B\in \mathcal{K}^{\vec{v}}_\mu$.
		Then  $\overline{\lk\{U_T^{(m,n)}1_B:(m,n)\in \Lambda^{\vec{v}}(b)\re\}}$ is a compact subset of $L^2(X,\mathcal{B}_X,\mu)$.
		Suppose that $\{(m_i,n_i)\}_{i=1}^s$ is an $\epsilon$-net with $\epsilon=\mu(B)-\mu(B)^2$, i.e. for any $(m,n)\in \Lambda^{\vec{v}}(b)$, we have $$\mu\lk(T^{-(m,n)}B\Delta T^{-(m_{j},n_{j})}B\re)=\|U_T^{(m,n)}1_B-U_T^{(m_{j},n_{j})}1_B\|_2<\epsilon$$ for some $j\in \{1,2,\ldots,s\}$.
		Without loss of generality, we may assume that for $(m_1,n_1)$, there exists a subset $A$ of  $\Lambda^{\vec{v}}(b)$ with $\overline{D}_b(A)\geq 1/s>0$ such that 	for any $(m,n)\in A$
		\begin{equation}\label{13}
			\begin{split}
				\epsilon&>\mu\lk(T^{-(m,n)}B\Delta T^{-(m_{1},n_{1})}B\re) \\
				&=2(\mu(B)-\mu\lk(T^{-(m,n)}B\cap T^{-(m_{1},n_{1})}B\re))\\
				&=2(\mu(B)^2-\mu\lk(T^{-(m,n)}B\cap T^{-(m_{1},n_{1})}B\re))+2(\mu(B)-\mu(B)^2),
			\end{split}
		\end{equation}
		which implies that $$\mu(B)^2-\mu\lk(T^{-(m,n)}B\cap T^{-(m_{1},n_{1})}B\re)\overset{\eqref{13}}<\frac{1}{2}\epsilon-(\mu(B)-\mu(B)^2)=-\frac{1}{2}\epsilon.$$
		Let $C=T^{-(m_{1},n_{1})}B$. Then
		\begin{align*}
			&\limsup_{k\rightarrow \infty}\frac{1}{\#(\Lambda_k^{\vec{v}}(b))}\sum_{(m,n)\in\Lambda_k^{\vec{v}}(b)}|\langle U_T^{(m,n)}1_B,1_C\rangle-\mu(B)\mu(C)|\\
			\geq& \limsup_{k\rightarrow \infty}\frac{1}{\#(\Lambda_k^{\vec{v}}(b))}\sum_{(m,n)\in A}|\langle U_T^{(m,n)}1_B,1_C\rangle-\mu(B)\mu(C)|\\
			=&\limsup_{k\rightarrow \infty}\frac{1}{\#(\Lambda_k^{\vec{v}}(b))}\sum_{(m,n)\in A}|\mu(B)^2-\mu\lk(T^{-(m,n)}B\cap T^{-(m_{1},n_{1})}B\re)|\\
			\geq&\limsup_{k\rightarrow \infty}\frac{1}{\#(\Lambda_k^{\vec{v}}(b))}\sum_{(m,n)\in A}\frac{\epsilon}{2}=\frac{\epsilon}{2}\times \overline{D}_b(A)>0,
		\end{align*}
		a contradiction to (b).
		
		(a)$\Rightarrow$(b). Since $(X,\mathcal{B}_X, \mu, T)$ is $\vec{v}$-weakly mixing, we have $\mathcal{K}^{\vec{v}}_\mu=\{\emptyset, X\}$. We recall the skew product $(\widetilde{X},\widetilde{\mathcal{B}},\widetilde{\mu},\T)$ define in \eqref{eq:skew product}. 
		Then by  Lemma \ref{lem6}, we obtain that
		\begin{align*}
			\mathbb{E}(1_{\O}\times 1_{B} |\mathcal{K}_{\widetilde{\mu}})=\mathbb{E}(1_{\O}\times 1_{B}|\mathcal{C}\times\mathcal{K}^{ \vec{v}}_{\mu})=\mathbb{E}(1_{B}|\mathcal{K}^{\vec{v}}_\mu)=\mu(B).
		\end{align*}
		Using Theorem \ref{thm9}, there exists a sequence $\widetilde{S} \subset \mathbb{Z}_+$ with $d(\widetilde{S})=1$ such that for any  $A\in\mathcal{C}$,
		\begin{align}\label{2}
			0=&\lim\limits_{n\to \infty\atop n\in \widetilde{S}}\langle U_{\T}^n ( 1_{\O}\times 1_{B}-\mathbb{E}( 1_{\O}\times 1_{B}|\mathcal{K}_{\widetilde{\mu}})),1_A\times1_{C}\rangle\notag \\
			&=\lim\limits_{n\to \infty\atop n\in \widetilde{S}}\langle U_{\T}^n ( 1_{\O}\times 1_{B})-\mu(B),1_A\times1_{C}\rangle\notag \\
			&=\lim\limits_{n\to \infty\atop n\in \widetilde{S}}\langle 1_{ [0, 1-\{n\beta\})\times T^{-(n,[n\beta])}B}+
			1_{[1-\{n\beta\},1)\times T^{-(n,[n\beta]+1)}B },1_A\times1_{C}\rangle\notag \\
			&\quad-\mu(B)\mu(C)m(A)\notag\\
			&=\lim\limits_{n\to \infty\atop n\in \widetilde{S}}(\mu(T^{-(n,[n\beta])}B\cap C)m([0,1-\{n\beta\})\cap A)\notag\\
			&\quad+\mu(T^{-(n,[n\beta]+1)}B\cap C)m([1-\{n\beta\},1)\cap A))\notag\\
			&\quad-\mu(B)\mu(C)m(A).
		\end{align}
		Now we prove 
		\begin{equation}\label{eq:ave}
			\lim\limits_{k\to \infty}\frac{1}{k}\sum_{n=0}^{k-1}|\mu(T^{-(n,[n\beta])}B\cap C)-\mu(B)\mu(C)|=0.
		\end{equation}
		By contradiction, 	we assume that there exists  $\epsilon>0$ such that $$\limsup\limits_{k\to \infty}\frac{1}{k}\sum_{n=0}^{k-1}|\mu(T^{-(n,[n\beta])}B\cap C)-\mu(B)\mu(C)|=2\epsilon>0.$$  So there exists a subsequence $P$ of  $\widetilde{S}$ with $\overline{d}(P)>\epsilon$ such that for any $n\in P$, \begin{equation}\label{eq:contr}
			|\mu(T^{-(n,[n\beta])}B\cap C)-\mu(B)\mu(C)|>\epsilon.
		\end{equation}
		Applying pointwise ergodic theorem on the ergodic $\mathbb{Z}$-m.p.s. $(\O,\mathcal{C},m,R_\b)$, one has that
		$$\lim\limits_{k\to \infty}\frac{1}{k}\#\{n\in [0,k-1]: 1-\{n\beta\}\in [0,\frac{\epsilon }{2})\}=\frac{\epsilon}{2}.$$
		Then there exists a subsequence $Q$ of $P$ with $\overline{d}(Q)>\frac{\epsilon}{2}$
		such that $|\mu(T^{-(n,[n\beta])}B\cap C)-\mu(B)\mu(C)|>\epsilon$ and $1-\{n\beta\}\in [\frac{\epsilon }{2},1)$ for any $n\in Q$.
		Substitute $A=[0,\frac{\epsilon }{2})$ into \eqref{2}, we obtain that
		\begin{align*}
			0&=\lim\limits_{n\to \infty,n\in Q}
			\mu(T^{-(n,[n\beta])}B\cap C)m( A)-\mu(B)\mu(C)m(A)\\
			&=\frac{\epsilon }{2}\cdot\lim\limits_{n\to \infty,n\in Q}(\mu\lk(T^{-(n,[n\beta])}B\cap C)-\mu(B)\mu(C)\re),
		\end{align*}
		which implies that $\lim\limits_{n\to \infty,n\in Q}|\mu(T^{-(n,[n\beta])}B\cap C)-\mu(B)\mu(C)|=0$, a contradiction to \eqref{eq:contr}. Thus, \eqref{eq:ave} holds, which together with the invariance of $\mu$, implies that 	for any $l\in \mathbb{Z}$,
		\begin{equation}\label{14}
			\lim\limits_{k\to \infty}\frac{1}{k}\sum_{n=0}^{k-1}|\mu(T^{-(n,[n\beta]+l)}B\cap C)-\mu(B)\mu(C)|=0.
		\end{equation}
		
		Now, we finish the proof of (b), which is divided into two cases.
		\item[\textbf{Case 1}.]
		When $b\ge1/2$, it is easy to see that $\#(\Lambda_k^{\vec{v}}(b))\ge 2k.$
		Hence
		\begin{align*}
			&\lim_{k\rightarrow \infty}\frac{1}{\#(\Lambda_k^{\vec{v}}(b))}\sum_{(m,n)\in\Lambda_k^{\vec{v}}(b)}|\langle U_T^{(m,n)}1_B,1_C\rangle-\mu(B)\mu(C)|\\
			\le& \lim_{k\rightarrow \infty}\frac{1}{k}\cdot
			\sum_{l=-([b]+1)}^{[b]+1}\sum_{n=0}^{k-1}|\mu(T^{-(n,[n\beta]+l)}B\cap C)-\mu(B)\mu(C)|\\
			=&  \sum_{l=-([b]+1)}^{[b]+1}\lim_{k\rightarrow \infty}\frac{1}{k}\cdot\sum_{n=0}^{k-1}|\mu(T^{-(n,[n\beta]+l)}B\cap C)-\mu(B)\mu(C)|\\
			\overset{\eqref{14}}=&0.
		\end{align*}

		\item[\textbf{Case 2.}]
		When $ 0<b< 1/2$,
		for each $k\in\mathbb{N}$, $\#(\Lambda_k^{\vec{v}}(b))=
		\#(\mathcal{F}_k)$, where $\mathcal{F}_k=\{0\le m\le k-1: \{m\beta\} \in [0,b]\cup [1-b,1)\}$.
		Using pointwise ergodic theorem again, one has
		$
		\lim\limits_{k\to \infty}\frac{1}{k}\#(\mathcal{F}_k)=b.
		$
		Therefore,
		\begin{align*}
			&\lim_{k\rightarrow \infty}\frac{1}{\#(\Lambda_k^{\vec{v}}(b))}\sum_{(m,n)\in\Lambda_k^{\vec{v}}(b)}|\langle U_T^{(m,n)}1_B,1_C\rangle-\mu(B)\mu(C)|\\
			\le& \lim_{k\rightarrow \infty}\frac{k}{\#(\mathcal{F}_k)}\cdot \frac{1}{k}\sum_{n=0}^{k-1}|\mu(T^{-(n,[n\beta]+l)}B\cap C)-\mu(B)\mu(C)|\\
			=&\frac{1}{2b}\cdot0\overset{\eqref{14}}=0.
		\end{align*}
		The conclusion in (b) is obtained by two cases above-mentioned.
	\end{proof}
	
	\begin{cor}\label{cor3}
		Let  $(X,\mathcal{B}_X,\mu,T)$ be a $\mathbb{Z}^2$-m.p.s., $\vec{v}=(1,\beta)\in\mathbb{R}^2$ be a direction vector and $b\in (0,\infty)$. Then the following statements are equivalent.
		\begin{itemize}
			\item [(a)] $(X,\mathcal{B}_X, \mu, T)$ is $\vec{v}$-weakly mixing.
			\item [(b)]  For any $B,C\in\mathcal{B}_X$, there exists a subset $\mathcal{Q}_{B,C}$ of $\Lambda_+^{\vec{v}}(b)$ with
			$D_b(\mathcal{Q}_{B,C})=1$ such that for any subset $\{(m_i,n_i)\}_{i=1}^\infty$ of $\mathcal{Q}_{B,C}$ that satisfies $\{m_i\}_{i=1}^\infty$ is a strictly monotone increasing sequence, one has 
			\[\lim_{i\to\infty}\mu(T^{-(m_i,n_i)}B\cap C)=\mu(B)\mu(C).\]
			\item [(c)] 
			For any $B,C\in\mathcal{B}_X$, there exists a subset $\mathcal{Q}_{B,C}$ of $\Lambda_+^{\vec{v}}(b)$ with
			$D_b(\mathcal{Q}_{B,C})=1$ such that for any $\{(m_i,n_i)\}_{i=1}^\infty$, one has 
			\[\lim_{i\to\infty}\mu(T^{-(m_i,n_i)}B\cap C)=\mu(B)\mu(C).\]
		\end{itemize}
	\end{cor}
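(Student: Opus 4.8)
The plan is to restate everything through Theorem~\ref{thm13} and then run a Koopman--von Neumann type density argument adapted to the strip $\Lambda^{\vec{v}}(b)$. Fix $B,C\in\mathcal{B}_X$ and write $a_{(m,n)}=\lk|\mu(T^{-(m,n)}B\cap C)-\mu(B)\mu(C)\re|\in[0,1]$ for $(m,n)\in\Lambda^{\vec{v}}(b)$; since $\langle T^{(m,n)}1_B,1_C\rangle=\mu(T^{-(m,n)}B\cap C)$, Theorem~\ref{thm13} asserts that $\vec{v}$-weak mixing is equivalent to $\frac{1}{\#(\Lambda_k^{\vec{v}}(b))}\sum_{(m,n)\in\Lambda_k^{\vec{v}}(b)}a_{(m,n)}\to 0$. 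I would prove the cycle (a)$\Rightarrow$(c)$\Rightarrow$(b)$\Rightarrow$(a). Here (c) is read as ranging over sequences in $\mathcal{Q}_{B,C}$ whose first coordinates are unbounded (equivalently $(m_i,n_i)\to\infty$), so that the only difference from (b) is that the monotonicity of $\{m_i\}$ is dropped; with this reading (c)$\Rightarrow$(b) is immediate, as the sequences permitted in (b) form a subclass of those in (c) and the same $\mathcal{Q}_{B,C}$ works.

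For (b)$\Rightarrow$(a) I would first observe that the sequential hypothesis in (b) is equivalent to the slice-wise statement that $a_{(m,n)}\to 0$ as $m\to\infty$ with $(m,n)\in\mathcal{Q}_{B,C}$: if this failed there would be $\epsilon>0$ and points of $\mathcal{Q}_{B,C}$ with arbitrarily large first coordinate and $a\ge\epsilon$, from which one extracts a subsequence with strictly increasing $\{m_i\}$ contradicting (b). Granting this, given $\epsilon>0$ pick $K$ with $a_{(m,n)}<\epsilon$ for all $(m,n)\in\mathcal{Q}_{B,C}$ with $m\ge K$; splitting $\frac{1}{\#(\Lambda_k^{\vec{v}}(b))}\sum_{\Lambda_k^{\vec{v}}(b)}a_{(m,n)}$ into the part over $\mathcal{Q}_{B,C}$ and the part over its complement $J=\Lambda^{\vec{v}}(b)\setminus\mathcal{Q}_{B,C}$, the former is at most $\epsilon+\#(\Lambda_K^{\vec{v}}(b))/\#(\Lambda_k^{\vec{v}}(b))$ and the latter is at most $\#(J\cap\Lambda_k^{\vec{v}}(b))/\#(\Lambda_k^{\vec{v}}(b))\to 0$ by the density-one property of $\mathcal{Q}_{B,C}$. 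Hence the Cesàro average tends to $0$, and Theorem~\ref{thm13}, (b)$\Rightarrow$(a), gives that the system is $\vec{v}$-weak mixing.

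The main work is (a)$\Rightarrow$(c). By Theorem~\ref{thm13}, (a)$\Rightarrow$(c), the Cesàro average of $a_{(m,n)}$ over $\Lambda_k^{\vec{v}}(b)$ tends to $0$. For $j\in\mathbb{N}$ set $E_j=\{(m,n)\in\Lambda^{\vec{v}}(b):a_{(m,n)}\ge 1/j\}$, so that $E_1\subseteq E_2\subseteq\cdots$. Markov's inequality gives $\#(E_j\cap\Lambda_k^{\vec{v}}(b))\le j\sum_{(m,n)\in\Lambda_k^{\vec{v}}(b)}a_{(m,n)}$, whence each $E_j$ has density zero in the strip. I would then choose $0=k_0<k_1<k_2<\cdots$ with $\#(E_{j+1}\cap\Lambda_k^{\vec{v}}(b))<\frac{1}{j+1}\#(\Lambda_k^{\vec{v}}(b))$ for all $k\ge k_j$, and put
$$J=\bigcup_{j\ge 1}\lk(E_{j+1}\cap\{(m,n)\in\Lambda^{\vec{v}}(b):k_j\le m<k_{j+1}\}\re),\qquad \mathcal{Q}_{B,C}=\Lambda^{\vec{v}}(b)\setminus J.$$
By the nesting $E_1\subseteq E_2\subseteq\cdots$, for $k_j\le k<k_{j+1}$ we have $J\cap\Lambda_k^{\vec{v}}(b)\subseteq E_{j+1}\cap\Lambda_k^{\vec{v}}(b)$, so $\#(J\cap\Lambda_k^{\vec{v}}(b))/\#(\Lambda_k^{\vec{v}}(b))<1/(j+1)\to 0$ and $\mathcal{Q}_{B,C}$ has density one; moreover if $(m,n)\in\mathcal{Q}_{B,C}$ and $k_j\le m<k_{j+1}$ then $(m,n)\notin E_{j+1}$, i.e. $a_{(m,n)}<1/(j+1)$, so $a_{(m,n)}\to 0$ as $m\to\infty$ along $\mathcal{Q}_{B,C}$. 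This is exactly the conclusion of (c), and a fortiori of (b).

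The hard part is precisely this last construction. Unlike the classical one-dimensional Koopman--von Neumann lemma, the index set is the two-dimensional strip $\Lambda^{\vec{v}}(b)$, so the density-zero/density-one bookkeeping must be carried out against the counting function $\#(\Lambda_k^{\vec{v}}(b))$ rather than ordinary integer density; the linear growth of $\#(\Lambda_k^{\vec{v}}(b))$ in $k$ established in the proof of Theorem~\ref{thm13} is what keeps the Markov estimate and the block construction meaningful. I also need to check that the three formulations of convergence---slice-wise ($a_{(m,n)}\to 0$ as $m\to\infty$), along monotone-$\{m_i\}$ sequences as in (b), and along arbitrary sequences tending to infinity as in (c)---coincide; this follows from the bounded multiplicity of $\Lambda^{\vec{v}}(b)$ over each first coordinate (at most $\lfloor b\rfloor+1$ points), which confines any discrepancy to a density-zero set and reconciles the sequential statements with the slice-wise one.
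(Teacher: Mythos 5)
Your proposal is correct and follows essentially the same route as the paper: both rest on Theorem \ref{thm13} together with the Koopman--von Neumann style block construction (nested threshold sets with density zero by a Markov estimate, blocks $[k_j,k_{j+1})$, density-one complement $\mathcal{Q}_{B,C}$), and both recover weak mixing from the sequential statement by splitting the Ces\`aro average over $\mathcal{Q}_{B,C}$ and its low-density complement. The only difference is cosmetic: you prove (a)$\Rightarrow$(c) directly and get (c)$\Rightarrow$(b) for free, whereas the paper proves (a)$\Rightarrow$(b) first and then upgrades to (c) by decomposing an arbitrary infinite subset of the strip into finitely many strictly monotone sequences via the strip's bounded width over each first coordinate --- the same fact you invoke, just used in the opposite direction.
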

	\begin{proof}
		(a)$\Rightarrow$(b). For each $p\in\mathbb{N}$,  let 
		\[\mathcal{A}_{B,C}^{p}:=\{(m,n)\in\Lambda^{\vec{v}}(b):|\mu(T^{-(m,n)}B\cap C)-\mu(B)\mu(C)|\geq 1/p\}.\]
		Then 
		\[(1/p)\cdot\frac{\#(\mathcal{A}_{B,C}^{p}\cap([0,k-1]\times\mathbb{Z}))}{\#(\Lambda_k^{\vec{v}}(b))}\leq \frac{1}{\#(\Lambda_k^{\vec{v}}(b))}\sum_{(m,n)\in\Lambda_k^{\vec{v}}(b)}|\mu(T^{-(m,n)}B\cap C)-\mu(B)\mu(C)|, \]
		which together with Theorem \ref{thm13}, implies that
		\[\lim_{k\to \infty}\frac{\#(\mathcal{A}_{B,C}^{p})\cap([0,k-1]\times\mathbb{Z})}{\#(\Lambda_k^{\vec{v}}(b))}=0.\]
		Then for each $p\in\mathbb{N}$, there exists $l_p\in\mathbb{N}$ such that for any $k\geq l_p$, one has 
		\[\frac{\#(\mathcal{A}_{B,C}^{p})\cap([0,k-1]\times\mathbb{Z})}{\#(\Lambda_k^{\vec{v}}(b))}<\frac{1}{p+1}.\]
		Let \[\mathcal{A}_{B,C}:=\bigcup_{p=0}^\infty \lk(\mathcal{A}_{B,C}^{p}\cap ([l_p,l_{p+1}-1]\times \mathbb{Z})\re),\]
		where $l_0=0$.
		If $l_p\leq k\leq l_{p+1}-1$, then 
		\[\mathcal{A}_{B,C}\cap ([0,k-1]\times\mathbb{Z})=\lk(\mathcal{A}_{B,C}\cap ([0,l_p-1]\times\mathbb{Z})\re)\cup \lk(\mathcal{A}_{B,C}\cap ([l_p,k-1]\times\mathbb{Z})\re).\]
		Hence  for each $p\in\mathbb{N}$,
		\begin{equation*}
			\begin{split}
				&\frac{\#(\mathcal{A}_{B,C})\cap([0,k-1]\times\mathbb{Z})}{\#(\Lambda_k^{\vec{v}}(b))}\\
				=&\frac{\#\lk(\mathcal{A}_{B,C}\cap ([0,l_p-1]\times\mathbb{Z})\re)+ \#\lk(\mathcal{A}_{B,C}\cap ([l_p,k-1]\times\mathbb{Z})\re)}{\#(\Lambda_k^{\vec{v}}(b))}\\
				\leq &\frac{\#\lk(\mathcal{A}_{B,C}^{p-1}\cap ([0,l_p-1]\times\mathbb{Z})\re)+ \#\lk(\mathcal{A}_{B,C}^{p}\cap ([l_p,k-1]\times\mathbb{Z})\re)}{\#(\Lambda_k^{\vec{v}}(b))}\\
				\leq &\frac{1}{p}+\frac{ \#\lk(\mathcal{A}^p_{B,C}\cap ([0,k-1]\times\mathbb{Z})\re)}{\#(\Lambda_k^{\vec{v}}(b))}\leq \frac{1}{p}+\frac{1}{p+1}.
			\end{split}
		\end{equation*}
		Let   $k\to \infty$. Then
		\[\lim_{k\to \infty}\frac{\#(\mathcal{A}_{B,C}\cap([0,k-1]\times\mathbb{Z}))}{\#(\Lambda_k^{\vec{v}}(b))}=0.\]
		Let $\mathcal{Q}_{B,C}:=\Lambda_k^{\vec{v}}(b)\setminus\mathcal{A}_{B,C}.$ Then 
		\[D_b(\mathcal{Q}_{B,C})=\lim_{k\to \infty}\frac{\#(\mathcal{Q}_{B,C}\cap([0,k-1]\times\mathbb{Z}))}{\#(\Lambda_k^{\vec{v}}(b))}=1.\]
		Moreover, for any infinite subset $\{(m_i,n_i)\}_{i=1}^\infty$ of $\mathcal{Q}_{B,C}$ that satisfies $\{m_i\}_{i=1}^\infty$ is a strictly monotone increasing sequence, if $m_i>l_p$ then $(m_i,n_i)\notin \mathcal{A}_{B,C}^p$, that is, 
		$|\mu(T^{-(m_i,n_i)}B\cap C)-\mu(B)\mu(C)|<1/p.$
		Thus, one has
		\[\lim_{i\to\infty}\mu(T^{-(m_i,n_i)}B\cap C)=\mu(B)\mu(C).\]
		
		(b)$\Rightarrow$(c). Given $B,C\in\mathcal{B}_X$,   let $\{(m_i,n_i)\}_{i=1}^\infty$ be an infinite sequence of $\mathcal{Q}_{B,C}$. Since $b\in(0,\infty)$ is finite, there exist $\{(m^{(j)}_i,n^{(j)}_i)\}_{i=1}^\infty$ that satisfy $\{m^{(j)}_i\}_{i=1}^\infty$ is a strictly monotone increasing sequence with respect to $i$ for each  $j=1,2,\ldots,r$ and $\{(s_i,t_i)\}_{i=1}^q$ for some $q\in\mathbb{N}$ such that
		\begin{itemize}
			\item 	$\{(m^{(1)}_i,n^{(1)}_i)\}_{i=1}^\infty,\ldots,\{(m^{(r)}_i,n^{(r)}_i)\}_{i=1}^\infty$ and $\{(s_i,t_i)\}_{i=1}^q$ are mutually disjoint.
			\item $\{(m_i,n_i)\}_{i=1}^\infty=\bigcup_{j=1}^r\{(m^{(j)}_i,n^{(j)}_i)\}_{i=1}^\infty\bigcup\{(s_i,t_i)\}_{i=1}^q$.
		\end{itemize} 
		By (b), we obtain that 
		$\lim_{i\to\infty}\mu(T^{-(m^{(j)}_i,n^{(j)}_i)}B\cap C)=\mu(B)\mu(C),$
		for each $j=1,2,\ldots,r$. This implies that \[\lim_{i\to\infty}\mu(T^{-(m_i,n_i)}B\cap C)=\mu(B)\mu(C).\]
		
		(c)$\Rightarrow$(a). By (c), for any $B,C\in\mathcal{B}_X$, there exists a subset $\mathcal{Q}_{B,C}$ of $\Lambda^{\vec{v}}(b)$ with
		$D_b(\mathcal{Q}_{B,C})=1$, so that for any $\{(s_j,t_j)\}_{j=1}^\infty$, one has 
		\[\lim_{j\to\infty}\mu(T^{-(s_j,t_j)}B\cap C)=\mu(B)\mu(C).\]
		In particular,  if we enumerate $\mathcal{Q}_{B,C}=\{(m_i,n_i)\}_{i=1}^\infty$, then for any $\epsilon>0$, there exists $I>0$ such that $i\geq I$ implies that 
		\begin{equation}\label{666}
			|\mu(T^{-(m_i,n_i)}B\cap C)-\mu(B)\mu(C)|<\epsilon/2.
		\end{equation}
		Since 	$D_b(\mathcal{Q}_{B,C})=1$, there exists $K>I$ such that for any $k>K$, one has 
		\begin{equation}\label{555}
			\frac{\#(\Lambda_k^{\vec{v}}(b)\setminus\mathcal{Q}_{B,C})}{\#(\Lambda_k^{\vec{v}}(b))}<\epsilon/4.
		\end{equation}
		Hence for any $k>K$,
		\begin{equation*}
			\begin{split}
				&\frac{1}{\#(\Lambda_k^{\vec{v}}(b))}\sum_{(m,n)\in\Lambda_k^{\vec{v}}(b)}|\mu(T^{-(m,n)}B\cap C)-\mu(B)\mu(C)|\\
				\leq& \frac{1}{\#(\Lambda_k^{\vec{v}}(b))}\sum_{(m,n)\in\mathcal{Q}_{B,C}\cap([0,k-1]\times \mathbb{Z})}|\mu(T^{-(m,n)}B\cap C)-\mu(B)\mu(C)|+\frac{2\#(\Lambda_k^{\vec{v}}(b)\setminus\mathcal{Q}_{B,C})}{\#(\Lambda_k^{\vec{v}}(b))}\\
				&\overset{\eqref{666},\eqref{555}}\leq  (\epsilon/2)\cdot\frac{\#(\mathcal{Q}_{B,C}\cap([0,k-1]\times \mathbb{Z}))}{\#(\Lambda_k^{\vec{v}}(b))}+\epsilon/2<\epsilon.
			\end{split}
		\end{equation*}
		Therefore
		\[\lim_{k\to \infty}\frac{1}{\#(\Lambda_k^{\vec{v}}(b))}\sum_{(m,n)\in\Lambda_k^{\vec{v}}(b)}|\mu(T^{-(m,n)}B\cap C)-\mu(B)\mu(C)|=0,\]
		which implies that $(X,\mathcal{B}_X, \mu, T)$ is $\vec{v}$-weakly mixing by Theorem \ref{thm13}.
	\end{proof}
	
	With the help of Theorem \ref{thm13}, by a proof similar to that of $\mathbb{Z}$-m.p.s., we have the following theorem.
	\begin{thm}Let  $(X,\mathcal{B}_X,\mu,T)$ be a $\mathbb{Z}^2$-m.p.s., $\vec{v}=(1,\beta)\in\mathbb{R}^2$ be a direction vector and $b\in (0,\infty)$. Then the following statements are equivalent.
		\begin{itemize}
			\item [(a)] $(X,\mathcal{B}_X, \mu, T)$ is $\vec{v}$-weakly mixing.
			\item [(b)]  For any $f\in L^2(X,\mathcal{B}_X,\mu)$, $$\lim_{k\rightarrow \infty}\frac{1}{\#(\Lambda_k^{\vec{v}}(b))}\sum_{(m,n)\in\Lambda_k^{\vec{v}}(b)}|\langle U_T^{(m,n)}f,f\rangle-\langle f,1\rangle\langle 1,f\rangle|=0.$$
			\item [(c)] 
			For any $f,g\in L^2(X,\mathcal{B}_X,\mu)$, $$\lim_{k\rightarrow \infty}\frac{1}{\#(\Lambda_k^{\vec{v}}(b))}\sum_{(m,n)\in\Lambda_k^{\vec{v}}(b)}|\langle U_T^{(m,n)}f,g\rangle-\langle f,1\rangle\langle1,g\rangle|=0.$$
		\end{itemize}
	\end{thm}
	
	\subsection{Characterization of directional weak mixing by directional sequence entropy}
	In this subsection, we describe directional weak mixing via directional sequence entropy. For this purpose, we need a consequence \cite[Theorem 1.1]{LX}, which is restated as follows.
	\begin{lem} \label{thm-1}
		Let  $(X,\mathcal{B}_X,\mu,T)$ be a $\mathbb{Z}^2$-m.p.s.,  $\vec{v}=(1,\beta)\in\mathbb{R}^2$ be a direction vector and $b\in (0,\infty)$. Given a finite measurable partition $\alpha$ of $X$, for any infinite subset $S'$ of $\Lambda^{\vec{v}}(b)$,
		$$h^{S'}_{\mu}(T,\alpha)\leq H_\mu(\alpha|\mathcal{K}_\mu^{\vec{v}}).$$
		Moreover, there exists an infinite sequence $S=\{(m_i,n_i)\}_{i=1}^{\infty}$ of $ \Lambda^{\vec{v}}(b)$ such that $\{m_i\}_{i=1}^{\infty}$ is strictly monotone and $$h^{S}_{\mu}(T,\alpha)=H_\mu(\alpha|\mathcal{K}_\mu^{\vec{v}}).$$
	\end{lem}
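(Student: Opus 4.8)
The plan is to prove the two assertions separately, in each case exploiting the dichotomy between the ``compact'' behaviour recorded by $\mathcal{K}_\mu^{\vec{v}}$ and the asymptotic independence of its orthocomplement along the cone $\Lambda^{\vec{v}}(b)$. Throughout I abbreviate $\xi_k=\bigvee_{i=1}^k T^{-(m_i,n_i)}\alpha$ for $S'=\{(m_i,n_i)\}_{i=1}^\infty$, and I use repeatedly that $\mathcal{K}_\mu^{\vec{v}}$ is invariant under every $T^{(m,n)}$ (as noted after \eqref{1}), so that $\mathbb{E}(U_T^{(m,n)}g\mid\mathcal{K}_\mu^{\vec{v}})=U_T^{(m,n)}\mathbb{E}(g\mid\mathcal{K}_\mu^{\vec{v}})$ and $H_\mu(T^{-(m,n)}\alpha\mid\mathcal{K}_\mu^{\vec{v}})=H_\mu(\alpha\mid\mathcal{K}_\mu^{\vec{v}})$.

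For the inequality $h^{S'}_\mu(T,\alpha)\le H_\mu(\alpha\mid\mathcal{K}_\mu^{\vec{v}})$ I would begin from the trivial decomposition
$$H_\mu(\xi_k)=H_\mu\!\left(\xi_k\mid\mathcal{K}_\mu^{\vec{v}}\right)+\Big(H_\mu(\xi_k)-H_\mu\!\left(\xi_k\mid\mathcal{K}_\mu^{\vec{v}}\right)\Big).$$
The conditional term is controlled cleanly: by subadditivity of conditional entropy and the invariance above,
$$H_\mu\!\left(\xi_k\mid\mathcal{K}_\mu^{\vec{v}}\right)\le\sum_{i=1}^k H_\mu\!\left(T^{-(m_i,n_i)}\alpha\mid\mathcal{K}_\mu^{\vec{v}}\right)=k\,H_\mu(\alpha\mid\mathcal{K}_\mu^{\vec{v}}),$$
so dividing by $k$ already produces the desired bound from this piece. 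The remaining task is to show the mutual-information defect $H_\mu(\xi_k)-H_\mu(\xi_k\mid\mathcal{K}_\mu^{\vec{v}})$ grows sublinearly in $k$. Here the point is that $\mathbb{E}(1_{T^{-(m_i,n_i)}A}\mid\mathcal{K}_\mu^{\vec{v}})=U_T^{(m_i,n_i)}\mathbb{E}(1_A\mid\mathcal{K}_\mu^{\vec{v}})$ for each $A\in\alpha$, and the orbit $\{U_T^{(m,n)}g:(m,n)\in\Lambda^{\vec{v}}(b)\}$ of any $\mathcal{K}_\mu^{\vec{v}}$-measurable $g$ is precompact in $L^2$; extracting a finite $\varepsilon$-net shows that the $\mathcal{K}_\mu^{\vec{v}}$-projections of the translated atoms take essentially finitely many shapes independently of $k$, so the shared information saturates and contributes only $o(k)$. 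Passing to the $\limsup$ then yields $h^{S'}_\mu(T,\alpha)\le H_\mu(\alpha\mid\mathcal{K}_\mu^{\vec{v}})$.

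For the sharpness statement I would construct $S=\{(m_i,n_i)\}$ inductively, keeping $\{m_i\}$ strictly increasing. The engine is a near-independence step: for any finite block already chosen and any $\varepsilon>0$, there exists $(m,n)\in\Lambda^{\vec{v}}(b)$ with $m$ arbitrarily large such that
$$H_\mu\!\left(T^{-(m,n)}\alpha\;\Big|\;\bigvee_{j}T^{-(m_j,n_j)}\alpha\right)\ge H_\mu(\alpha\mid\mathcal{K}_\mu^{\vec{v}})-\varepsilon.$$
Its proof rests on the decay of correlations $\langle U_T^{(m,n)}f,h\rangle\to\mu$-products along a density-one subset of $\Lambda^{\vec{v}}(b)$ for $f$ orthogonal to $L^2(X,\mathcal{K}_\mu^{\vec{v}},\mu)$, which follows by transporting to Park's system $(\widetilde{X},\widetilde{\mathcal{B}},\widetilde{\mu},W)$ of \eqref{36}, identifying $\mathcal{K}_{\widetilde{\mu}}=\mathcal{K}_\mu^{\vec{v}}\times\mathcal{C}$ via Lemma~\ref{lem6}, and applying Koopman--von Neumann (Lemma~\ref{thm9}); this is exactly the mechanism underlying Theorem~\ref{thm13}. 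Such decay lets the freshly chosen atom be made asymptotically independent of the past relative to $\mathcal{K}_\mu^{\vec{v}}$, so its conditional contribution is at least $H_\mu(\alpha\mid\mathcal{K}_\mu^{\vec{v}})-\varepsilon$. Running the induction with $\varepsilon\to0$ and diagonalising produces a sequence with $\liminf_k\tfrac1k H_\mu(\xi_k)\ge H_\mu(\alpha\mid\mathcal{K}_\mu^{\vec{v}})$, which together with the upper bound forces $h^S_\mu(T,\alpha)=H_\mu(\alpha\mid\mathcal{K}_\mu^{\vec{v}})$.

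I expect the main obstacle to be the two dual correlation estimates — the sublinear mutual-information bound in the upper half and the near-independence selection in the lower half — since both require converting the $L^2$-compactness of $\mathcal{K}_\mu^{\vec{v}}$ and the mixing of its complement into quantitative \emph{entropy} control along the skew cone. The cleanest way to absorb this difficulty, which I would pursue in parallel, is to work entirely inside $W$: iterates of $W$ sample $\Lambda^{\vec{v}}(b)$ through the points $(n,[n\beta])$, the factor $\mathcal{C}$ carries no entropy, and Lemma~\ref{lem6} converts $H_{\widetilde{\mu}}(\,\cdot\mid\mathcal{K}_{\widetilde{\mu}})$ back into $H_\mu(\alpha\mid\mathcal{K}_\mu^{\vec{v}})$, so the whole statement reduces to the classical Kushnirenko inequality and its sharpness for the single transformation $W$.
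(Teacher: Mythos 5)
You should first note that the paper contains no proof of this statement at all: it is imported verbatim as \cite[Theorem 1.1]{LX} (``we need a consequence \cite[Theorem 1.1]{LX}, which is restated as follows''), so your attempt can only be compared with the standard argument behind that citation and with the related constructions the paper does carry out. Judged that way, your second half (existence of $S$ attaining $H_\mu(\alpha|\mathcal{K}_\mu^{\vec{v}})$) is the right mechanism, and is essentially the relative version of what the paper itself does in Corollary \ref{cor3} and in the common-sequence theorem following Theorem \ref{thm-2}: for the finitely many pairs $(B,C)$ with $B\in\alpha$ and $C$ an atom of the past join, the correlations $\langle U_T^{(m,n)}(1_B-\mathbb{E}(1_B|\mathcal{K}_\mu^{\vec{v}})),1_C\rangle$ tend to $0$ along a density-one subset of $\Lambda^{\vec{v}}(b)$ (Theorem \ref{3.10}, via Lemma \ref{lem6} and Koopman--von Neumann), so $\mu(T^{-(m,n)}B\cap C)$ is close to $\int\mathbb{E}(1_{T^{-(m,n)}B}|\mathcal{K}_\mu^{\vec{v}})\,\mathbb{E}(1_C|\mathcal{K}_\mu^{\vec{v}})\,d\mu$, i.e.\ to the relatively independent coupling, whose conditional entropy is $\geq H_\mu(T^{-(m,n)}\alpha|\mathcal{K}_\mu^{\vec{v}})=H_\mu(\alpha|\mathcal{K}_\mu^{\vec{v}})$ exactly; continuity of conditional entropy in the finite joint distribution then yields your inductive step. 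You only gesture at this last comparison (``asymptotically independent of the past relative to $\mathcal{K}_\mu^{\vec{v}}$''), but it is fillable.

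The genuine gap is in your upper bound. You reduce it to the claim that $H_\mu(\xi_k)-H_\mu(\xi_k|\mathcal{K}_\mu^{\vec{v}})=o(k)$ and justify this by precompactness of the orbits of the individual projections $\mathbb{E}(1_A|\mathcal{K}_\mu^{\vec{v}})$, $A\in\alpha$. That inference does not work: the mutual information is computed from $\mathbb{E}(1_C|\mathcal{K}_\mu^{\vec{v}})$ for the exponentially many atoms $C$ of the join $\xi_k$, and these are not controlled by the finitely many ``shapes'' of the projections of the single translates $T^{-(m_i,n_i)}A$ --- that would require precisely the relative independence over $\mathcal{K}_\mu^{\vec{v}}$ which one cannot assume. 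Worse, in the special case $\alpha\subset\mathcal{K}_\mu^{\vec{v}}$ your claim reads $H_\mu(\xi_k)=o(k)$, which is exactly the directional Ku\v shnirenko theorem, i.e.\ the heart of the lemma, so as written the argument is circular there. The standard repair is to approximate the $\sigma$-algebra rather than project onto it: fix a finite $\mathcal{K}_\mu^{\vec{v}}$-measurable partition $\beta$; show $H_\mu\bigl(\bigvee_{i=1}^kT^{-(m_i,n_i)}\beta\bigr)=o(k)$ by the $\varepsilon$-net argument (each translate of $\beta$ is, in the sense of Lemma \ref{lem-2}, close to one of $N$ fixed partitions $\gamma_1,\ldots,\gamma_N$, whence $H_\mu\bigl(\bigvee_{i=1}^kT^{-(m_i,n_i)}\beta\bigr)\leq H_\mu\bigl(\bigvee_{j=1}^N\gamma_j\bigr)+k\varepsilon'$); then use the chain $H_\mu(\xi_k)\leq H_\mu\bigl(\bigvee_{i=1}^kT^{-(m_i,n_i)}\beta\bigr)+kH_\mu(\alpha|\beta)$ and take the infimum over such $\beta$, which equals $H_\mu(\alpha|\mathcal{K}_\mu^{\vec{v}})$. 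Finally, your fallback ``work entirely inside $W$'' is also not free: iterates of $W$ see only the columns $(n,[n\beta])$, mixed with $(n,[n\beta]+1)$ through the fiber $[0,1)^2$, while a general $S'\subset\Lambda^{\vec{v}}(b)$ carries bounded vertical offsets, so a splitting/subadditivity bookkeeping step is still needed before the classical $\mathbb{Z}$-action result can be invoked.
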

	With the help of  the above lemma, we have the following description.
	\begin{thm}\label{thm-2}
		Let  $(X,\mathcal{B}_X,\mu,T)$ be a $\mathbb{Z}^2$-m.p.s., $\vec{v}=(1,\beta)\in\mathbb{R}^2$ be a  vector and $b\in(0,\infty)$. Then the following three conditions are equivalent.
		\begin{itemize}
			\item[(a)]	$(X,\mathcal{B}_X,\mu,T)$ is  $\vec{v}$-weakly mixing.	
			\item[(b)] For any  $ B\in\mathcal{B}_X$ with $0<\mu(B)<1$, there exists an infinite  subset $S=\{(m_i,n_i)\}_{i=1}^{\infty}$ of $\Lambda^{\vec{v}}(b)$ such that  $h^{S}_{\mu}(T,\{B,B^c\})>0.$
			\item[(c)]For any nontrivial finite measurable partition $\alpha$ of $X$, there exists an infinite subset $S=\{(m_i,n_i)\}_{i=1}^{\infty}$ of $\Lambda^{\vec{v}}(b)$ such that  $h^{S}_{\mu}(T,\alpha)>0.$
		\end{itemize}
		\begin{proof}
			(a) $\Rightarrow$ (b).  Given $ B\in\mathcal{B}_X$ with $0<\mu(B)<1$,
			applying Lemma \ref{thm-1} on the measurable partition $\{B,B^c\}$, there exists an infinite sequence $S=\{(m_i,n_i)\}_{i=1}^{\infty}$ of $ \Lambda^{\vec{v}}(b)$ such that  $$h^{S}_{\mu}(T,\{B,B^c\})=H_\mu(\{B,B^c\}|\mathcal{K}_\mu^{\vec{v}}).$$
			Since $(X,\mathcal{B}_X,\mu,T)$ is $\vec{v}$-weakly mixing, one has $B\notin \{\emptyset,X\}=\mathcal{K}_\mu^{\vec{v}}$, and hence $$h^{S}_{\mu}(T,\{B,B^c\})=H_\mu(\{B,B^c\}|\mathcal{K}_\mu^{\vec{v}})>0.$$
			
			(b) $\Rightarrow$ (c). For any  $B\in \alpha$ with $0< \mu(B)<1$, by (b), there exists an infinite  sequence $S=\{(m_i,n_i)\}_{i=1}^{\infty}$ of $\Lambda^{\vec{v}}(b)$ such that $h^{S}_{\mu}(T,\{B,B^c\})>0.$
			Since $\alpha$ is finer than $\{B,B^c\}$, it follows that $$h^{S}_{\mu}(T,\alpha)\geq h^{S}_{\mu}(T,\{B,B^c\})>0.$$
			
			(c) $\Rightarrow$ (a). By contradiction,  we assume that $(X,\mathcal{B}_X,\mu,T)$ is not $\vec{v}$-weakly mixing. Then there exists $B\in \mathcal{K}_\mu^{\vec{v}}$ with $0<\mu(B)<1$ such that, by Lemma \ref{thm-1}, for any infinite subset $S'$ of $\Lambda^{\vec{v}}(b)$,
			$$h^{S'}_{\mu}(T,\{B,B^c\})\leq H_\mu(\{B,B^c\}|\mathcal{K}_\mu^{\vec{v}})=0,$$
			a contradiction to (c). Therefore $(X,\mathcal{B}_X,\mu,T)$ is $\vec{v}$-weakly mixing.
		\end{proof}
	\end{thm}
	The following theorem demonstrates that the sequence $S$ in Theorem \ref{thm-2} can be consistent with respect to all finite measurable partitions.
	\begin{thm}
		Let  $(X,\mathcal{B}_X,\mu,T)$ be a $\mathbb{Z}^2$-m.p.s., $\vec{v}=(1,\beta)\in\mathbb{R}^2$ be a directional vector and $b\in(0,\infty)$. Then the following two conditions are equivalent.
		\begin{itemize}
			\item[(a)]	$(X,\mathcal{B}_X,\mu,T)$ is  $\vec{v}$-weakly mixing.	
			\item[(b)]There exists an infinite subset $S=\{(m_i,n_i)\}_{i=1}^{\infty}$ of $\Lambda^{\vec{v}}(b)$ such that  $h^{S}_{\mu}(T,\alpha)=H_{\mu}(\alpha)$
			for any finite measurable partition $\alpha$ of $X$.
		\end{itemize}
	\end{thm}
	\begin{proof}
		(a) $\Rightarrow$ (b). Since $(X,\mathcal{B}_X,\mu,T)$ is $\vec{v}$-weakly mixing, by Corollary \ref{cor3}, one has  for any $B,C\in\mathcal{B}_X$, there exists a subsequence $\mathcal{Q}_{B,C}$ of $\Lambda^{\vec{v}}(b)$ with
		$D_b(\mathcal{Q}_{B,C})=1$ such that for any infinite sequence $\{(m_i,n_i)\}_{i=1}^\infty$ of $\mathcal{Q}_{B,C}$,
		\begin{equation}\label{33}
			\lim_{i\to\infty}\mu(T^{-(m_i,n_i)}B\cap C)=\mu(B)\mu(C).
		\end{equation}
		Since $\mathcal{B}_X$ is separable, there exists a countably  subsets $\{\eta_l\}_{l=1}^\infty$ of finite partitions such that  $\eta_l\uparrow\mathcal{B}_X$.
		Now we inductively define  an infinite sequence $S=\{(m_i,n_i)\}_{i=1}^\infty$  such that 
		\begin{equation}\label{15}
			H_{\mu}\lk(T^{-(m_j,n_j)}\eta_l|\bigvee_{i=1}^{j-1}T^{-(m_i,n_i)}\eta_l\re)\geq H_\mu(\eta_l)-2^{-j}
		\end{equation}
		for all $j\geq 2$ and $1\leq l\leq j.$ Suppose $(m_1,n_1),\ldots,(m_{j-1},n_{j-1})$ have been defined. Since $\eta_l$ is finite for each $l\in\mathbb{N}$,  $\mathcal{Q}_{j}:=\cap_{l=1}^j\cap_{B\in\eta_l}\cap_{C\in\bigvee_{i=1}^{j-1}T^{-(m_i,n_i)}\eta_l}\mathcal{Q}_{B,C}$
		is an infinite sequence of $\Lambda^{\vec{v}}(b)$.
		Using \eqref{33}, we may choose $(m_j,n_j)\in \mathcal{Q}_{j}$ such that 
		\begin{equation}\label{44}
			|\log{\mu(T^{-(m_j,n_j)}B\cap C)}-\log{\mu(B)\mu(C)}|<2^{-j}
		\end{equation}
		for all $B\in\eta_l$ and $C\in\bigcup_{l=1}^j\bigvee_{i=1}^{j-1}T^{-(m_i,n_i)}\eta_l$.
		Then
		\begin{align*}
			&H_{\mu}\lk(T^{-(m_j,n_j)}\eta_l|\bigvee_{i=1}^{j-1}T^{-(m_i,n_i)}\eta_l\re)\\
			=&\sum_{B,C}-\mu(T^{-(m_j,n_j)}B\cap C)\log{\frac{\mu(T^{-(m_j,n_j)}B\cap C)}{\mu(C)}}\\
			\overset{\eqref{44}}\geq&\sum_{B,C}\mu(T^{-(m_j,n_j)}B\cap C)\lk(\log{\mu(C)}-\log{({\mu(B)\mu(C)})}-2^{-j}\re)\\
			=&\sum_{B,C}-\mu(T^{-(m_j,n_j)}B\cap C)\log{\mu(B)}-2^{-j}\\
			=&\sum_{B}-\mu(B)\log{\mu(B)}-2^{-j}=H_{\mu}(\eta_l)-2^{-j}.
		\end{align*}
		Thus, we obtain a sequence $\{(m_i,n_i)\}_{i=1}^\infty$ satisfying \eqref{15}.
		
		Given any $l\in\mathbb{N}$, for $j>l$,
		\begin{align*}
			&H_{\mu}\lk(\bigvee_{i=1}^{j}T^{-(m_i,n_i)}\eta_l\re)\\
			=&H_{\mu}\lk(\bigvee_{i=1}^{l}T^{-(m_i,n_i)}\eta_l\re)+\sum_{i=l+1}^{j}H_{\mu}\lk(T^{-(m_i,n_i)}\eta_l|\bigvee_{s=1}^{i-1}T^{-(m_s,n_s)}\eta_l\re)\\
			\overset{\eqref{15}}\geq&H_{\mu}\lk(\bigvee_{i=1}^{l}T^{-(m_i,n_i)}\eta_l\re)+(j-l)H_{\mu}(\eta_l)-\sum_{i=l+1}^{j}2^{-i}.
		\end{align*}
		Therefore, we deduce that
		\begin{equation}\label{55}
			\begin{split}
				h_{\mu}^S(T,\eta_l)=&\limsup_{j\to\infty}\frac{1}{j}H_{\mu}\lk(\bigvee_{i=1}^{j}T^{-(m_i,n_i)}\eta_l\re)\\
				\geq &\limsup_{j\to\infty}\lk(\frac{1}{j}H_{\mu}\lk(\bigvee_{i=1}^{l}T^{-(m_i,n_i)}\eta_l\re)+\frac{j-l}{j}H_{\mu}(\eta_l)-\frac{1}{j}\sum_{i=l+1}^{j}2^{-i}\re)\\
				=&H_{\mu}(\eta_l).
			\end{split}
		\end{equation}
		Now let $\alpha$ be any finite measurable partition of $X$. Given $\delta>0$, by Lemma \ref{lem-2} and the density of $\{\eta_l\}_{l=1}^\infty$, there exists  $l\in\mathbb{N}$ such that 
		\[H_\mu(\alpha|\eta_l)+H_\mu(\eta_l|\alpha)<\delta\text{ and }H_{\mu}(\eta_l)\geq H_{\mu}(\alpha).\]
		This implies that
		\begin{equation*}
			\begin{split}
				h^S_{\mu}(T,\alpha)\geq h^S_{\mu}(T,\eta_l)-H_{\mu}(\eta_l|\alpha)
				\geq  h^S_{\mu}(T,\eta_l)-\delta
				\overset{\eqref{55}}\geq H_{\mu}(\eta_l)-\delta
				\geq  H_{\mu}(\alpha)-\delta.
			\end{split}
		\end{equation*} 
		As $\delta>0$ is arbitrary,  one has $h^S_{\mu}(T,\alpha)\geq H_{\mu}(\alpha)$, which together with Lemma \ref{thm-1},  implies that
		$h^S_{\mu}(T,\alpha)=H_{\mu}(\alpha)$.
		
		(b) $\Rightarrow$ (a). This direction is a direct corollary of Theorem \ref{thm-2}.
	\end{proof}
	
	\subsection{Characterization of directional weak mixing by weakly mixing functions}
	In this section, we use  weakly mixing functions to characterize directional weak mixing. Moreover, we investigate many properties of it.
	
	The following consequence can be easily proved by the definition.
	\begin{prop}\label{prop2}
		Let  $(X,\mathcal{B}_X,\mu,T)$ be a $\mathbb{Z}^2$-m.p.s. and $\vec{v}=(1,\beta)\in\mathbb{R}^2$ be a direction vector. Then the following two statements are equivalent.
		\begin{itemize}
			\item[(a)] $(X,\mathcal{B}_X,\mu,T)$ is $\vec{v}$-weakly mixing.
			\item[(b)] Each $f\in L^2(X,\mathcal{B}_X,\mu)$ with $\int_{X}fd\mu=0$ is $\vec{v}$-weakly mixing.
		\end{itemize}
	\end{prop}
	Now, we provide a directional version of Koopman-von Neumann spectrum mixing theorem. 
	\begin{thm}\label{thm8}
		Let  $(X,\mathcal{B}_X,\mu,T)$ be a $\mathbb{Z}^2$-m.p.s. and $\vec{v}=(1,\beta)\in\mathbb{R}^2$ be a direction vector. Then 
		$$L^2(X,\mathcal{B}_X,\mu)= WM^{\mu,\vec{v}}(X)\bigoplus L^2(X,\mathcal{K}_\mu^{\vec{v}},\mu).$$ 
		\begin{proof}
			Firstly, we show that $L^2(X,\mathcal{K}_\mu^{\vec{v}},\mu)$ is orthogonal to  $WM^{\mu,\vec{v}}(X)$. Indeed, for any $g\in WM^{\mu,\vec{v}}(X)$, we have $\mathbb{E}(g|\mathcal{K}_\mu^{\vec{v}})(x)=0$ for $\mu$-a.e. $x\in X$. Thus, $\mathbb{E}(1_\O\times g|\mathcal{K}_{\widetilde{\mu}})(t,x)=0$ for $\widetilde{\mu}$-a.e. $(t,x)\in \widetilde{X}$, where $\mathcal{K}_{\widetilde{\mu}}$ is the Kronecker algebra of the skew product $(\widetilde{X},\widetilde{\mathcal{B}},\widetilde{\mu},\T)$ defined in \eqref{eq:skew product}.
			So $1_\O\times g$ is a weakly mixing function on $\widetilde{X}$. Similarly,  for any $f\in L^2(X,\mathcal{K}_\mu^{\vec{v}},\mu)$,  $1_\O\times f\in L^2(\widetilde{X},\mathcal{K}_{\widetilde{\mu}},\widetilde{\mu}).$  Applying Lemma \ref{thm9} on the $\mathbb{Z}$-m.p.s. $(\widetilde{X},\mathcal{K}_{\widetilde{\mu}},\widetilde{\mu},\T)$, we conclude $\langle 1_\O\times f,1_\O\times g\rangle_{\widetilde{X}}=0,$ which implies that $\langle f,g \rangle_X=0.$
			
			Next we claim that for any $g\in L^2(X,\mathcal{B}_X,\mu)$ if it satisfies $\langle f,g \rangle_X=0$ for all $f\in L^2(X,\mathcal{K}_\mu^{\vec{v}},\mu)$, then $g\in WM^{\mu,\vec{v}}(X)$.
			By contradiction, we assume that $g\notin WM^{\mu,\vec{v}}(X)$. Let $E=\{x\in X:\mathbb{E}(g|\mathcal{K}_\mu^{\vec{v}})(x)>0\}$. Then $E$ is $\mathcal{K}_\mu^{\vec{v}}$-measurable and either $\mu(E)>0$ or $\mu(E^c)>0$. Without loss of generality, we assume that $\mu(E)>0$. Thus, $$0=\langle 1_E,g \rangle_X=\int_{E}g d\mu=\int_{E}\mathbb{E}(g|\mathcal{K}_\mu^{\vec{v}}) d\mu>0,$$
			a contradiction. Therefore $g\in WM^{\mu,\vec{v}}(X).$
			
			Since $\mathbb{E}(\cdot|\mathcal{K}_\mu^{\vec{v}})$ is the orthogonal projection from $L^2(X,\mathcal{B}_X,\mu)$ to $L^2(X,\mathcal{K}_\mu^{\vec{v}},\mu)$, one has  $\langle f-\mathbb{E}(f|\mathcal{K}_\mu^{\vec{v}}),g \rangle_X=0$ for any $g\in L^2(X,\mathcal{K}_\mu^{\vec{v}},\mu)$. By the above claim, we have  $f-\mathbb{E}(f|\mathcal{K}_\mu^{\vec{v}})\in WM^{\mu,\vec{v}}(X).$
			
			To sum up the above proof,  we have $L^2(X,\mathcal{B}_X,\mu)= WM^{\mu,\vec{v}}(X)\bigoplus L^2(X,\mathcal{K}_\mu^{\vec{v}},\mu).$
		\end{proof}
	\end{thm}
	Now we study the structure of  $WM^{\mu,\vec{v}}(X)$ by asymptotic independence.
	\begin{thm}\label{3.10}
		Let  $(X,\mathcal{B}_X,\mu,T)$ be a $\mathbb{Z}^2$-m.p.s., $\vec{v}=(1,\beta)\in\mathbb{R}^2$ be a direction vector and $b\in(0,\infty)$. Then
		\begin{equation*}
			\begin{split}
				&WM^{\mu,\vec{v}}(X)=\{f\in L^2(X,\mathcal{B}_X,\mu):\\
				&\qquad\qquad \text{ for any }g\in L^2(X,\mathcal{B}_X,\mu), \lim_{k\to \infty}\frac{1}{\#(\Lambda_k^{\vec{v}}(b))}\sum_{(m,n)\in\Lambda_k^{\vec{v}}(b)}|\langle U_T^{(m,n)}f,g\rangle|=0\}.
			\end{split}
		\end{equation*}
	\end{thm}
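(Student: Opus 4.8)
The plan is to prove the two inclusions separately, in close analogy with the proof of Theorem \ref{thm13}: the forward inclusion will mirror the implication (a)$\Rightarrow$(c) there, and the reverse inclusion will mirror (b)$\Rightarrow$(a). Write $\mathcal{W}$ for the set on the right-hand side. Observe first that every $f\in WM^{\mu,\vec{v}}(X)$ has $\int_X f\,d\mu=\int_X\mathbb{E}(f|\mathcal{K}_\mu^{\vec{v}})\,d\mu=0$, and conversely every $f\in\mathcal{W}$ has mean zero (take $g\equiv 1$, so that $\langle T^{(m,n)}f,1\rangle=\langle f,1\rangle$ is constant and its average can vanish only if $\langle f,1\rangle=0$); this is exactly why no $\langle f,1\rangle\langle 1,g\rangle$ correction appears in the averages.

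For $WM^{\mu,\vec{v}}(X)\subseteq\mathcal{W}$, fix $f$ with $\mathbb{E}(f|\mathcal{K}_\mu^{\vec{v}})=0$ and an arbitrary $g\in L^2(X,\mathcal{B}_X,\mu)$. I would lift $f$ to the suspension $\mathbb{Z}$-m.p.s. $(\widetilde{X},\widetilde{\mathcal{B}},\widetilde{\mu},W)$ of \eqref{36} by setting $\widetilde{f}=f\times 1_{[0,1)^2}$. By Proposition \ref{e1} and Lemma \ref{lem6},
$$\mathbb{E}(\widetilde{f}\,|\,\mathcal{K}_{\widetilde{\mu}})=\mathbb{E}(f|\mathcal{K}_\mu^{\vec{v}})\times\mathbb{E}(1_{[0,1)^2}|\mathcal{C})=0,$$
so $\widetilde{f}\in WM^{\widetilde{\mu}}(\widetilde{X})$ and Lemma \ref{thm9} provides a density-one set $\widetilde{S}\subset\mathbb{Z}_+$ along which $\langle U_W^n\widetilde{f},H\rangle\to 0$ for every $H\in L^2(\widetilde{X},\widetilde{\mathcal{B}},\widetilde{\mu})$. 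Expanding $U_W^n\widetilde{f}$ through $W^n=\phi_{n,n\beta}$ exactly as in \eqref{2} gives
$$U_W^n\widetilde{f}=(U_T^{(n,[n\beta])}f)\times 1_{[0,1)\times[0,1-\{n\beta\})}+(U_T^{(n,[n\beta]+1)}f)\times 1_{[0,1)\times[1-\{n\beta\},1)},$$
and testing against $H=g\times 1_{[0,1)\times D}$ with $D=[0,\delta)$ and restricting to the $n$ with $\{n\beta\}\le 1-\delta$ (whose complement in $[0,k-1]$ has density tending to $0$ as $\delta\to 0$, by the pointwise ergodic theorem) kills the second summand. Running the contradiction argument of Claim 1 then yields the single-level estimate $\frac{1}{k}\sum_{n=0}^{k-1}|\langle U_T^{(n,[n\beta]+l)}f,g\rangle|\to 0$ for every $l\in\mathbb{Z}$, the general $l$ reducing to $l=0$ via $\langle U_T^{(n,[n\beta]+l)}f,g\rangle=\langle U_T^{(n,[n\beta])}f,U_T^{(0,-l)}g\rangle$. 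Finally I would rerun the same three-case bookkeeping (the cases $b\ge 1$; $\beta\in\mathbb{Q}$ with $0<b<1$; $\beta\notin\mathbb{Q}$ with $0<b<1$) using $\Lambda^{\vec{v}}(b)\subset A(b)$ to dominate the average over $\Lambda_k^{\vec{v}}(b)$ by a finite combination of these single-level sums, giving $f\in\mathcal{W}$.

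For $\mathcal{W}\subseteq WM^{\mu,\vec{v}}(X)$, I would argue by contraposition: assume $h:=\mathbb{E}(f|\mathcal{K}_\mu^{\vec{v}})\neq 0$ and construct a $g$ violating the averaging condition. Since $\mathcal{K}_\mu^{\vec{v}}$ is $T$-invariant, the projection commutes with $U_T^{(m,n)}$, so for every $g'\in L^2(X,\mathcal{K}_\mu^{\vec{v}},\mu)$ one has $\langle U_T^{(m,n)}f,g'\rangle=\langle U_T^{(m,n)}h,g'\rangle$. As $h\in L^2(X,\mathcal{K}_\mu^{\vec{v}},\mu)=\mathcal{A}_c^{\vec{v}}(b)$, the set $\overline{\{U_T^{(m,n)}h:(m,n)\in\Lambda^{\vec{v}}(b)\}}$ is compact and hence admits a finite $(\|h\|_2/2)$-net $\{U_T^{(m_i,n_i)}h\}_{i=1}^s$. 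By pigeonhole one of the induced classes $A\subseteq\Lambda^{\vec{v}}(b)$ has upper density $\overline{D}(A)\ge 1/s$ relative to $\Lambda^{\vec{v}}(b)$; putting $g=U_T^{(m_{i_0},n_{i_0})}h$ for that class and using unitarity of $U_T$, for every $(m,n)\in A$ one gets $|\langle U_T^{(m,n)}f,g\rangle|=|\langle U_T^{(m,n)}h,g\rangle|\ge\|h\|_2^2-(\|h\|_2/2)\|h\|_2=\|h\|_2^2/2$. Consequently the $\limsup$ of the averages over $\Lambda_k^{\vec{v}}(b)$ is at least $(\|h\|_2^2/2)\,\overline{D}(A)>0$, contradicting $f\in\mathcal{W}$; thus $h=0$, i.e. $f\in WM^{\mu,\vec{v}}(X)$.

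I expect the main obstacle to be the forward inclusion, specifically the passage from the $\mathbb{Z}$-averages in the suspension back to the directional averages over $\Lambda_k^{\vec{v}}(b)$: one must isolate a single level $l$ by exploiting the fractional-part weights $\{n\beta\}$ and then control $\#(\Lambda_k^{\vec{v}}(b))$ across the three cases. Since every ingredient already occurs in the proof of Theorem \ref{thm13}, the genuinely new point is only to carry the argument out for arbitrary $f,g\in L^2(X,\mathcal{B}_X,\mu)$ rather than for indicator functions, noting that the mean-zero normalization removes the product corrections.
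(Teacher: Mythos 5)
Your proposal is correct, and its first half is essentially the paper's own argument: for $WM^{\mu,\vec{v}}(X)\subseteq\mathcal{W}$ the paper likewise lifts $f$ to the suspension $(\widetilde{X},\widetilde{\mathcal{B}},\widetilde{\mu},W)$ of \eqref{36}, computes $\mathbb{E}\bigl(f\times 1_{[0,1)^2}\,\big|\,\mathcal{K}_{\widetilde{\mu}}\bigr)=0$ via Proposition \ref{e1} and Lemma \ref{lem6}, applies Lemma \ref{thm9}, and then repeats the Claim‑1 transfer and the three-case counting from Theorem \ref{thm13}; your explicit reduction of level $l$ to level $0$ through $\langle U_T^{(n,[n\beta]+l)}f,g\rangle=\langle U_T^{(n,[n\beta])}f,U_T^{(0,-l)}g\rangle$ just makes precise what the paper leaves as ``similar to the proof of Theorem \ref{thm13}.''

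Where you genuinely diverge is the inclusion $\mathcal{W}\subseteq WM^{\mu,\vec{v}}(X)$, and there your route is the more reliable one. The paper argues by contradiction with the single test function $g=\mathbb{E}(f|\mathcal{K}_\mu^{\vec{v}})$ and concludes that the averages of $\langle U_T^{(m,n)}f,\mathbb{E}(f|\mathcal{K}_\mu^{\vec{v}})\rangle$ converge to $\langle f,1\rangle\langle 1,\mathbb{E}(f|\mathcal{K}_\mu^{\vec{v}})\rangle$, which it then identifies with $\|f\|_1\|\mathbb{E}(f|\mathcal{K}_\mu^{\vec{v}})\|_1\neq 0$ to reach a contradiction. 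But, as your opening observation shows, any $f$ in the right-hand side has $\langle f,1\rangle=0$, so the product $\langle f,1\rangle\langle 1,\mathbb{E}(f|\mathcal{K}_\mu^{\vec{v}})\rangle$ vanishes and is not equal to $\|f\|_1\|\mathbb{E}(f|\mathcal{K}_\mu^{\vec{v}})\|_1$; as written, the paper's limit is $0=0$ and no contradiction is produced. Your replacement repairs exactly this: using $T$-invariance of $\mathcal{K}_\mu^{\vec{v}}$ to write $\langle U_T^{(m,n)}f,g'\rangle=\langle U_T^{(m,n)}h,g'\rangle$ for $h=\mathbb{E}(f|\mathcal{K}_\mu^{\vec{v}})$ and $g'\in L^2(X,\mathcal{K}_\mu^{\vec{v}},\mu)$, then exploiting compactness of $\overline{\{U_T^{(m,n)}h:(m,n)\in\Lambda^{\vec{v}}(b)\}}$, a finite $(\|h\|_2/2)$-net and pigeonhole to find a translate $g$ of $h$ and a set $A\subseteq\Lambda^{\vec{v}}(b)$ of positive relative upper density on which $|\langle U_T^{(m,n)}f,g\rangle|\geq\|h\|_2^2/2$. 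This is precisely the mechanism of the paper's proof of (b)$\Rightarrow$(a) in Theorem \ref{thm13}, transplanted from indicator functions to $h$, and it yields the lower bound on the $\limsup$ of the averages that the contradiction actually requires. In short: your forward inclusion matches the paper; your reverse inclusion is a different, self-contained argument that closes a gap in the paper's own write-up of that step.
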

	\begin{proof}
		Let \begin{equation*}
			\begin{split}
				&\mathfrak{A}:=\{f\in L^2(X,\mathcal{B}_X,\mu):\\
				&\qquad\qquad \text{ for any }g\in L^2(X,\mathcal{B}_X,\mu), \lim_{k\to \infty}\frac{1}{\#(\Lambda_k^{\vec{v}}(b))}\sum_{(m,n)\in\Lambda_k^{\vec{v}}(b)}|\langle U_T^{(m,n)}f,g\rangle|=0\}.
			\end{split}
		\end{equation*}
		
		We firstly prove that $WM^{\mu,\vec{v}}(X)\subset \mathfrak{A}.$
		Given $f\in WM^{\mu,\vec{v}}(X)$, $\mathbb{E}(f|\mathcal{K}_\mu^{\vec{v}})(x)=0$ for $\mu$-a.e. $x\in X$. Note that for any $g\in L^2(X,\mathcal{B}_X,\mu)$, $1_\O\times g\in L^2(\widetilde{X},\widetilde{\mathcal{B}},\widetilde{\mu})$, where $(\widetilde{X},\widetilde{\mathcal{B}},\widetilde{\mu},\T)$ is the skew product system defined  by \eqref{eq:skew product}. Hence 
		\[\mathbb{E}(1_\O\times f|\mathcal{K}_{\mathcal{C}\times\widetilde{\mu}} )=\mathbb{E}(f|\mathcal{K}_\mu^{\vec{v}})\cdot\mathbb{E}(1_\O|\mathcal{C})=0.\]
		By Lemma \ref{thm9} for any $g\in L^2(X,\mathcal{B}_X,\mu)$, there exists a subsequence $S$ of $\mathbb{Z}_+$ with $d(S)=1$ such that 
		\[\lim_{n\to \infty,n\in {S}}\langle 1_\O\times U_\T^nf,1_\O\times g \rangle=0,\]
		that is,
		\[\lim_{n\to \infty,n\in {S}}\lk(\langle U_T^{(n,[n\beta])}f, g \rangle\cdot(1-\{n\beta\})+\langle U_T^{(n,[n\beta]+1)}f, g \rangle\cdot\{n\beta\}\re)=0.\]
		By the ergodicity of $(\O,\mathcal{C},m,R_\b)$, one has 
		\[\lim_{k\to \infty}\frac{1}{\#(\Lambda_k^{\vec{v}}(b))}\sum_{(m,n)\in\Lambda_k^{\vec{v}}(b)}|\langle U_T^{(m,n)}f,g\rangle|=0,\]
		which implies that $WM^{\mu,\vec{v}}(X)\subset \mathfrak{A}$.
		
		Next we prove that $\mathfrak{A}\subset WM^{\mu,\vec{v}}(X).$
		Let $f\in \mathfrak{A}$. We may assume that $\|f\|_2\neq 0$, because if $\|f\|_2=0$, it is clear that $f\in WM^{\mu,\vec{v}}(X)$.
		By contradiction, we suppose that $f\notin WM^{\mu,\vec{v}}(X)$. Then there exists $A\in\mathcal{B}_X$ with $\mu(A)>0$ such that
		$\mathbb{E}(f|\mathcal{K}_\mu^{\vec{v}})(x)\neq 0, \text{ for every } x\in A$, which implies that $\|\mathbb{E}(f|\mathcal{K}_\mu^{\vec{v}})\|_2>0.$ Since $f-\mathbb{E}(f|\mathcal{K}_\mu^{\vec{v}})\in WM^{\mu,\vec{v}}(X)$, and $WM^{\mu,\vec{v}}(X)\subset \mathfrak{A}$ which we have proved, it follows that
		\begin{equation}
			\begin{split}
				0=&\lim_{k\to \infty}\frac{1}{\#(\Lambda_k^{\vec{v}}(b))}\sum_{(m,n)\in\Lambda_k^{\vec{v}}(b)}|\langle U_T^{(m,n)}(f-\mathbb{E}(f|\mathcal{K}_\mu^{\vec{v}})),\mathbb{E}(f|\mathcal{K}_\mu^{\vec{v}})\rangle|\\
				=&\lim_{k\to \infty}\frac{1}{\#(\Lambda_k^{\vec{v}}(b))}\sum_{(m,n)\in\Lambda_k^{\vec{v}}(b)}|\langle U_T^{(m,n)}f,\mathbb{E}(f|\mathcal{K}_\mu^{\vec{v}})\rangle-\langle \mathbb{E}(f|\mathcal{K}_\mu^{\vec{v}}),\mathbb{E}(f|\mathcal{K}_\mu^{\vec{v}})\rangle|\\
			\end{split}
		\end{equation}
		Therefore, \[\lim_{k\to \infty}\frac{1}{\#(\Lambda_k^{\vec{v}}(b))}\sum_{(m,n)\in\Lambda_k^{\vec{v}}(b)}\langle U_T^{(m,n)}f,\mathbb{E}(f|\mathcal{K}_\mu^{\vec{v}})\rangle=\|\mathbb{E}(f|\mathcal{K}_\mu^{\vec{v}})\|_2^2\neq0,\]
		a contradiction. This shows that $\mathfrak{A}\subset WM^{\mu,\vec{v}}(X).$
	\end{proof}

	\section{Mean ergodic theorem}\label{sec:mean ergodic}
	The following result is a directional version of a mean ergodic theorem, which can be viewed as an extension of \cite{MR285690} to directional dynamical systems.
	\begin{thm}\label{thm:mean ergodic}
		Let  $(X,\mathcal{B}_X,\mu,T)$ be a $\mathbb{Z}^2$-m.p.s., $\vec{v}=(1,\beta)\in\mathbb{R}^2$ be a directional vector and $b\in(0,\infty)$. Then the following conditions are equivalent.
		\begin{itemize}
			\item[(a)]	$(X,\mathcal{B}_X,\mu,T)$ is  $\vec{v}$-weakly mixing.	
			\item[(b)]For any infinite subsequence $Q=\{(m_i,n_i)\}_{i=1}^\infty$ of $\Lambda_+^{\vec{v}}(b)$  with
			$\underline{D}_b(Q)>0$
			one has 
			\begin{equation}\label{66}
				\lim_{N\to \infty}\|\frac{1}{N}\sum_{i=1}^{N}U_T^{(m_i,n_i)}f-\int_{X}fd\mu\|_2=0
			\end{equation}
			for all $f\in L^2(X,\mathcal{B}_X,\mu)$.
		\end{itemize}
	\end{thm}
	\begin{proof}
		(a) $\Rightarrow$ (b). Without loss of generality, we may assume that $\{m_i\}_{i=1}^\infty$ is a strictly monotone increasing sequence, otherwise we use the method in the proof of Corollary \ref{cor3} to decompose  $\{m_i\}_{i=1}^\infty$ into sereval strictly monotone increasing  sequences. 
		
		For any $A\in\mathcal{B}_X$, one has 
		\begin{equation}\label{77}
			\begin{split}
				&\|\frac{1}{N}\sum_{i=1}^{N}U_T^{(m_i,n_i)}1_A-\mu(A)d\mu\|_2^2=\int_X|\frac{1}{N}\sum_{i=1}^{N}1_{T^{-(m_i,n_i)}A}-\mu(A)|^2d\mu\\
				=&\frac{1}{N^2}\sum_{i,j=1}^{N}\mu({T^{-(m_i-m_j,n_i-n_j)}A\cap A})-\mu(A)^2.
			\end{split}
		\end{equation}
		Since for each $i\in\{1,2,\ldots,N\}$, 
		$\beta m_i-b\leq n_i\leq \beta m_i+b$, it follows that for each $i,j\in\{1,2,\ldots,N\}$
		\[\beta( m_i-m_j)-2b\leq n_i-n_j\leq \beta (m_i-m_j)+2b.\]
		So $(m_i-m_j,n_i-n_j)\in\Lambda^{\vec{v}}(2b)$ for each $i,j\in\{1,2,\ldots,N\}$. Hence for each $j\in\{1,2,\ldots,N\}$, one has 
		\begin{equation}\label{88}
			\begin{split}
				&\sum_{i=1}^{N}|\mu({T^{-(m_i-m_j,n_i-n_j)}A\cap A})-\mu(A)^2|\\
				\leq &\sum_{(m,n)\in\Lambda^{\vec{v}}(2b)\cap([-m_N,m_N])\times\mathbb{Z}}|\mu({T^{-(m,n)}A\cap A})-\mu(A)^2|\\
				\leq &2\sum_{(m,n)\in\Lambda^{\vec{v}}_{m_N}(2b)}|\mu({T^{-(m,n)}A\cap A})-\mu(A)^2|.
			\end{split}
		\end{equation}
		Combining \eqref{77}, \eqref{88} and the fact that $(X,\mathcal{B}_X,\mu,T)$ is $\vec{v}$-weakly mixing, one has 
		\begin{align*}
			&\lim_{N\to \infty}\|\frac{1}{N}\sum_{i=1}^{N}U_T^{(m_i,n_i)}1_A-\mu(A)d\mu\|_2^2\\
			\leq&\lim_{N\to \infty}\frac{1}{N^2}\cdot \lk(N\cdot\lk( 2\sum_{(m,n)\in\Lambda^{\vec{v}}_{m_N}(2b)}|\mu({T^{-(m,n)}A\cap A})-\mu(A)^2|\re)\re)\\
			\leq &\lim_{N\to \infty}\frac{\#(\Lambda^{\vec{v}}_{m_N}(2b))}{N}\cdot\frac{2}{\#(\Lambda^{\vec{v}}_{m_N}(2b))} \sum_{(m,n)\in\Lambda^{\vec{v}}_{m_N}(2b)}|\mu({T^{-(m,n)}A\cap A})-\mu(A)^2|\\
			\leq &\limsup_{N\to \infty}\frac{\#(\Lambda^{\vec{v}}_{N}(2b))}{\#(Q\cap([0,N-1]\times\mathbb{Z}))}\\
			&\qquad\qquad\qquad\cdot\lim_{N\to \infty}\frac{2}{\Lambda^{\vec{v}}_{m_N}(2b)} \sum_{(m,n)\in\Lambda^{\vec{v}}_{m_N}(2b)}|\mu({T^{-(m,n)}A\cap A})-\mu(A)^2|\\
			= &\frac{1}{\underline{D}_{2b}(Q)}\cdot0=0.
		\end{align*}
		Since the set of all $f\in L^2(X,\mathcal{B}_X,\mu)$ for which \eqref{66} holds forms a closed subspace  of $L^2(X,\mathcal{B}_X,\mu)$ which contains the set of indicator function, it follows that this subspace is $L^2(X,\mathcal{B}_X,\mu)$ itself.
		
		(b) $\Rightarrow$ (a). Note that for any $b_1,b_2\in(0,\infty)$, if an infinite subset $Q$ of $\Lambda^{\vec{v}}(b_2)$ satisfies $\underline{D}_{b_2}(Q)>0$ then $\underline{D}_{b_1}(Q)>0$. Hence we only need to prove the case for $b=1$. Let $Q=\{(n,[n\beta])\}_{n=1}^\infty$. Then $Q$ is a subset of $\Lambda_k^{\vec{v}}(1)$ with $\underline{D}_{1}(Q)>0$ and we have the following claim.
		\item[\textbf{Claim.}] For any $B,C\in\mathcal{B}_X$ and $\delta,\epsilon>0$, there exists $K>0$ such that for any $N\geq K$ and $m\in\mathbb{N}$ the inequality \[|\mu({T^{-(m,[m\beta])}B\cap T^{-(n,[n\beta])}C})-\mu(B)\mu(C)|\geq\epsilon\]
		has  at most $\delta N$ solutions $n$ with $1\leq n\leq N.$
		
		Now supposing the claim holds, we prove that	$(X,\mathcal{B}_X,\mu,T)$ is  $\vec{v}$-weakly mixing.  Suppose, for the sake of contradiction, that $(X,\mathcal{B}_X,\mu,T)$ is not $\vec{v}$-weakly mixing. Then by Corollary \ref{cor3}, there exist $\epsilon>0$, $B,C\in\mathcal{B}_X$ and $Q\subset\mathbb{Z}$ with $d(Q)>0$ such that for any $n\in{Q}$
		\[|\mu({T^{-(n,[n\beta])}B\cap C})-\mu(B)\mu(C)|\geq\epsilon.\]
		So for any $K\in\mathbb{N}$, there exists $N\geq K$ such that 
		\[\frac{\#(Q\cap [0,N-1])}{N}>d(Q)/2\]
		and for any $n\in Q\cap[0,N-1]$
		\[|\mu({T^{-(n,[n\beta])}B\cap C})-\mu(B)\mu(C)|\geq\epsilon.\]
		That is, the above inequality has at least $(d(Q)/2)\cdot N$ solutions $n$ with $1\leq n\leq N$,
		which contradicts Claim. Therefore, $(X,\mathcal{B}_X,\mu,T)$ is  $\vec{v}$-weakly mixing.
		
		Finally, we only need to prove the claim holds. Indeed,  suppose the claim is not valid. Then  there exist $B,C\in\mathcal{B}_X$, $\delta,\epsilon>0$ and sequences $\{m_k\}_{k=1}^\infty$ and $\{N_k\}_{k=1}^\infty$ with $N_{k+1}/N_k\to\infty$ as $k\to \infty$ such that for any $k\in\mathbb{N}$
		\begin{equation}\label{99}
			\mu({T^{-(m_k,[m_k\beta])}B\cap T^{-(n,[n\beta])}C})\geq\mu(B)\mu(C)+\epsilon
		\end{equation}
		has at least $\delta N_k$ solutions $N_{k-1}\leq n\leq N_{k}$. Assume that for a fixed $k\in\mathbb{N}$, \eqref{99} holds for $n_1,\ldots,n_s$ with $N_{k-1}\leq n_i\leq N_{k}$ for each $i=1,\ldots,s$. Then by H\"older inequality, one has
		\begin{equation}\label{11}
			\begin{split}
				&\frac{1}{s^2}\sum_{i,j=1}^{s}\mu({T^{-(n_j,[n_j\beta])}B\cap T^{-(n_i,[n_i\beta])}B})\\
				=&\mu(B)^2+\frac{1}{s^2}\int_X\lk(\sum_{i=1}^{s}1_{T^{-(n_i,[n_i\beta])}B}-s\mu(B)\re)^2d\mu\\
				\geq&\mu(B)^2+\frac{1}{s^2}\int_X\lk(\sum_{i=1}^{s}1_{T^{-(n_i,[n_i\beta])}B}\cdot1_{T^{-(m_k,[m_k\beta])}C}-s\mu(B)\cdot1_{T^{-(m_k,[m_k\beta])}C}\re)^2d\mu\\
				=&\mu(B)^2+\frac{1}{s^2}\|\sum_{i=1}^{s}1_{T^{-(n_i,[n_i\beta])}B}\cdot1_{T^{-(m_k,[m_k\beta])}C}-s\mu(B)\cdot1_{T^{-(m_k,[m_k\beta])}C}\|_2^2\\
				\geq&\mu(B)^2+\frac{1}{s^2}\|\sum_{i=1}^{s}1_{T^{-(n_i,[n_i\beta])}B}\cdot1_{T^{-(m_k,[m_k\beta])}C}-s\mu(B)\cdot1_{T^{-(m_k,[m_k\beta])}C}\|_1^2\\
				\geq&\mu(B)^2+\frac{1}{s^2}\lk(\sum_{i=1}^{s}\mu({T^{-(n_i,[n_i\beta])}B\cap T^{-(m_k,[m_k\beta])}C})-s\mu(B)\mu(C)\re)^2\\
				\overset{\eqref{99}}\geq &\mu(B)^2+\epsilon^2.
			\end{split}
		\end{equation}
		Consider a subsequence of $\{(n,[n\beta])\}_{n=1}^\infty$ consisting of all solutions $n$ taken for all $k\in\mathbb{N}$, denoted by $Q:=\{(n_j,[n_j\beta])\}_{j=1}^\infty$. According to the construction, one has for each $k\in\mathbb{N}$
		\begin{equation}\label{333}
			\frac{\#(Q\cap([0,N_k]\times \mathbb{Z}))}{ N_k}\geq \delta
		\end{equation}
		and
		\begin{equation}\label{222}
			\begin{split}
				\frac{1}{\#(Q\cap([0,N_k]\times \mathbb{Z}))^2}&\sum_{(m,n),(m',n')\in Q\cap([0,N_k]\times \mathbb{Z})}\mu(T^{-(m,n)}B\cap T^{-(m',n')}B)\\
				&\qquad\qquad\qquad\qquad\qquad\qquad\qquad\overset{\eqref{11}}\geq\mu(B)^2+\epsilon^2.
			\end{split}
		\end{equation}
		Let 
		$d=\frac{1}{\lk(\lk(\sqrt{\frac{\mu(B)^2+\epsilon^2}{\mu(B)^2+\epsilon^2/2}}-1\re)\delta\re)}\text{ and } P=Q\cup\{(nd,[nd\beta])\}_{n=1}^\infty.$
		Then \begin{equation}\label{16}
			\liminf_{k\to\infty}\frac{\#(P\cap ([0,k-1]\times \mathbb{Z}))}{\#(\Lambda_k^{\vec{v}}(1))}>0
		\end{equation}
		and 
		\begin{align*}
			&\frac{1}{\#(P\cap([0,N_k]\times \mathbb{Z}))^2}\sum_{(m,n),(m',n')\in P\cap([0,N_k]\times \mathbb{Z}}\mu(T^{-(m,n)}B\cap T^{-(m',n')}B)\\
			\geq& \frac{1}{(\#(Q\cap([0,N_k]\times \mathbb{Z})+\frac{N_k}{d}))^2}\sum_{(m,n),(m',n')\in Q\cap([0,N_k]\times \mathbb{Z})}\mu(T^{-(m,n)}B\cap T^{-(m',n')}B)\\
			\overset{\eqref{222}}\geq& (\mu(B)^2+\epsilon^2)\frac{(\#(Q\cap([0,N_k]\times \mathbb{Z})))^2}{(\#(Q\cap([0,N_k]\times \mathbb{Z})+\frac{N_k}{d}))^2}\\
			=&(\mu(B)^2+\epsilon^2)\frac{1}{(1+\frac{N_k}{d\cdot(\#(Q\cap([0,N_k]\times \mathbb{Z}))})^2}\\
			\overset{\eqref{333}}\geq&(\mu(B)^2+\epsilon^2)\frac{1}{(1+\frac{N_k}{d\delta N_k})^2}\\
			=&\mu(B)^2+\epsilon^2/2.
		\end{align*}
		We enumerate the above $P=\{(s_l,[s_l\beta])\}_{l=1}^\infty$. Then by \eqref{77} and the above inequality, we obtain that 
		\begin{equation*}
			\begin{split}
				&\|\frac{1}{N}\sum_{i=1}^{N}U_T^{(s_l,[s_l\beta])}1_B-\mu(B)\|_2^2\\
				=&\frac{1}{\#(P\cap\Lambda_k^{\vec{v}}(2))^2}\sum_{(m,n),(m',n')\in P\cap\Lambda_k^{\vec{v}}(2)}\mu(T^{-(m,n)}B\cap T^{-(m',n')}B)-\mu(B)^2\\
				>&\epsilon^2/2.
			\end{split}
		\end{equation*}
		This implies that 
		\[\limsup_{N\to \infty}\|\frac{1}{N}\sum_{i=1}^{N}U_T^{(s_l,[s_l\beta])}1_B-\mu(B)\|_2>\epsilon,\]
		a  contradiction to (b).  Thus, we finish the proof of Theorem \ref{thm:mean ergodic}.
	\end{proof}

	\section{Relation between weak mixing and directional weak mixing}\label{sec:relation}
	In this section, we investigate the relation between weak mixing under $\mathbb{Z}^2$-actions and directional weak mixing.  For the definition of weak mixing under group actions, we can refer to \cite{BM}. In particular, we say a $\mathbb{Z}^2$-m.p.s. $(X,\mathcal{B}_{X},\mu,T)$  weak mixing if its Kronecker algebra $\mathcal{K}_{\mu}=\{X,\emptyset\}$.
	
	Recall that authors \cite{LX} proved that for a $\mathbb{Z}^2$-m.p.s. $(X,\mc{B}_X,\mu,T)$, $\mu$ has discrete spectrum if and only if it has directional discrete spectrum along two different directions. However, the following example shows that there exists a weakly mixing $\mathbb{Z}^2$-m.p.s. with the property that is neither $\vec{v}$-weakly mixing nor $\vec{w}$-weakly mixing, where $\vec{v},\vec{w}\in \mathbb{R}^2$ are two linearly independent direction vectors.
	\begin{exam}
		Denote by $(Y,2^Y,\mu)$ a measure space, where $Y=\{0,1\}$, $2^Y$ is the collection consisting of all subsets of $Y$ and the points $0,1$ have measure $1/2$. Let $$(X,\mathcal{B}_X,m)=\prod_{-\infty}^{\infty}(Y,2^Y,\mu).$$ Define $Id_X:X\to X$ by $Id_X(\{x_n\})=\{x_n\}$ and $T:X\to X$ by $T(\{x_n\})=\{y_n\}$ where $y_n=x_{n+1}$ for all $n\in\mb{Z}$, that is, $T$ is the two-sided $(\frac{1}{2},\frac{1}{2})$-shift.  Let $(X\times X,\mathcal{B}_X\times\mathcal{B}_X,m\times m)$ be the product measure space, $T_1=Id_X\times T$ and $T_2=T\times Id_X$. Then we define a $\mb{Z}^2$-action $\widetilde{T}$  on $X\times X$ by $\widetilde{T}^{(m,n)}=T_1^{(m+n)}T_2^{(m-n)}$ for all  $(m,n)\in \mb{Z}^2$.
		Let $\widetilde{X}=X\times X$, $\mathcal{B}_{\widetilde{X}}=\mathcal{B}_{X\times X}$ and $\widetilde{\mu}=m\times m$. Then we obtain a $\mathbb{Z}^2$-m.p.s. $(\widetilde{X},\mathcal{B}_{\widetilde{X}},\widetilde{\mu},\widetilde{T}).$
		Take $\vec{v}=(1,-1)$ and $\vec{w}=(1,1)$.
		
		It is easy to check it is weakly mixing.
		Now, we show that $(\widetilde{X},\mathcal{B}_{\widetilde{X}},\widetilde{\mu},\widetilde{T})$ is neither $\vec{v}$-weakly mixing nor $\vec{w}$-weakly mixing.
		For any $B\in \mathcal{B}_X$, we take $X\times B\in \mathcal{B}_{\widetilde{X}}$. Then for $b=1/3$,  one has
		$$\overline{\{U_{\widetilde{T}}^{(m,n)}1_{X\times B}:(m,n)\in\Lambda^{\vec{v}}(1/3) \}}=\overline{\{U_{\widetilde{T}}^{(n,-n)}1_{X\times B}:n\in\mb{Z}\}}=\{1_{X\times B}\}$$
		is a compact subset of $L^2(\widetilde{X}, \mc{B}_{\widetilde{X}},\widetilde{\mu})$, which implies that $(\widetilde{X},\mathcal{B}_{\widetilde{X}},\widetilde{\mu},\widetilde{T})$ is not $\vec{v}$-weakly mixing. Similarly, we can show $(\widetilde{X},\mathcal{B}_{\widetilde{X}},\widetilde{\mu},\widetilde{T})$ is not $\vec{w}$-weak mixing.
	\end{exam} 
	\begin{rem}
		In \cite[Example 2.9]{2022arXiv220306710R}, authors provided an example to show that there exists a weakly mixing $\mathbb{Z}^2$-m.p.s. which is not directional weak mixing along any rational direction under their definition. It is easy to check that it also holds for our definition.
	\end{rem}

	Conversely, we have the following result, which is obtained by the definition.
	\begin{prop}
		Let  $(X,\mathcal{B}_X,\mu,T)$ be a $\mathbb{Z}^2$-m.p.s. If there exists a direction $\vec{v}\in\mathbb{R}^2$ such that $(X,\mathcal{B}_X,\mu,T)$ is $\vec{v}$-weakly mixing, then it is weakly mixing.
	\end{prop}
	
	In order to further study the relation between directional weak mixing and weak mixing, we begin with a combinatorial result in \cite{LX}.
	\begin{lem}\label{c}
		Let  $\vec{v}=(1,\beta_1), \vec{w}=(1,\beta_2)\in \mb{R}^2$ be two directions with $\beta_1\neq \beta_2$. Then $$\mb{Z}^2=\Lambda^{\vec{v}}(b)+\Lambda^{\vec{w}}(b)$$ for any $b> 4([|\beta_1-\beta_2|]+1)$, where $$\Lambda^{\vec{v}}(b)+\Lambda^{\vec{w}}(b)=\{(m_1+m_2,n_1+n_2):(m_1,n_1)\in \Lambda^{\vec{v}}(b)\text{ and } (m_2,n_2)\in \Lambda^{\vec{w}}(b)\}.$$
	\end{lem}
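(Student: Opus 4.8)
The plan is to establish the set equality by proving the nontrivial inclusion $\mb{Z}^2\subset \Lambda^{\vec{v}}(b)+\Lambda^{\vec{w}}(b)$, the reverse inclusion being automatic. So I would fix an arbitrary target $(a,c)\in\mb{Z}^2$ and seek a decomposition $(a,c)=(m_1,n_1)+(m_2,n_2)$ with $(m_1,n_1)\in\Lambda^{\vec{v}}(b)$ and $(m_2,n_2)\in\Lambda^{\vec{w}}(b)$. Setting $m_2=a-m_1$ and $n_2=c-n_1$, this amounts to choosing one integer $m_1$ and one integer $n_1$ meeting both membership constraints at once. By the definition of the strips (with slopes $\beta_1$ and $\beta_2$), the constraints read $n_1\in[\beta_1 m_1-b/2,\,\beta_1 m_1+b/2]$ and, after the substitution, $n_1\in[c-\beta_2(a-m_1)-b/2,\,c-\beta_2(a-m_1)+b/2]$. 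Hence the problem reduces to finding an integer $m_1$ for which these two closed intervals of length $b$ share an integer.

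First I would record when the intervals overlap. Writing $p:=\beta_1 m_1$ and $q:=c-\beta_2(a-m_1)$ for their centers, both intervals have length $b$, so their intersection is a closed interval of length $b-|p-q|$ whenever $|p-q|\le b$. Since every closed real interval of length at least $1$ contains an integer, it suffices to arrange $|p-q|\le b-1$; the required $n_1$ then lives in the overlap and the decomposition follows.

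The crux is thus to control $p-q=(\beta_1-\beta_2)m_1+(\beta_2 a-c)$ as $m_1$ ranges over $\mb{Z}$. Putting $\gamma=\beta_1-\beta_2\neq 0$ and $d=\beta_2 a-c$, the quantity $\gamma m_1+d$ traverses an arithmetic progression of step $|\gamma|=|\beta_1-\beta_2|$, so choosing $m_1$ to be the nearest integer to $-d/\gamma$ yields $|p-q|=|\gamma|\,|m_1+d/\gamma|\le|\gamma|/2$. This is exactly where the hypothesis is used: from $b>4([|\beta_1-\beta_2|]+1)$ one gets $|\gamma|=|\beta_1-\beta_2|<[|\beta_1-\beta_2|]+1<b/4$, and also $b>4$; hence $|p-q|\le|\gamma|/2<b/8<b-1$. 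Therefore the two intervals overlap in a subinterval of length at least $1$, which contains an integer $n_1$, giving the desired decomposition of $(a,c)$.

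I expect no genuine obstacle here beyond bookkeeping the two interval constraints correctly. The one point deserving care is the step from ``the intervals overlap'' to ``the intervals contain a common integer,'' which is precisely why one wants a definite length margin in the overlap rather than mere nonemptiness; the generous constant $4([|\beta_1-\beta_2|]+1)$ in the hypothesis supplies this margin comfortably, so the estimate is robust and requires no delicate optimization of the bound on $b$.
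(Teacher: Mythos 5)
Your proof is correct. Note, however, that this paper never proves the lemma itself: it is explicitly imported as ``a combinatorial result in \cite{LX}'' and only restated here, so there is no internal proof to compare against, and your argument supplies a complete, self-contained one. Your route is the natural geometric one: writing $(a,c)=(m_1,n_1)+(m_2,n_2)$ and eliminating $(m_2,n_2)$ reduces the problem to finding an integer $n_1$ common to the two length-$b$ intervals centered at $p=\beta_1 m_1$ and $q=c-\beta_2(a-m_1)$, i.e.\ to intersecting the strip $\Lambda^{\vec{v}}(b)$ with the translate of $\Lambda^{\vec{w}}(b)$ passing through $(a,c)$; choosing $m_1$ nearest to $(c-\beta_2 a)/(\beta_1-\beta_2)$ then gives $|p-q|\leq |\beta_1-\beta_2|/2$. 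The quantitative bookkeeping checks out: the hypothesis $b>4([|\beta_1-\beta_2|]+1)$ yields both $|\beta_1-\beta_2|<[|\beta_1-\beta_2|]+1<b/4$ and $b>4$, so the overlap is a closed interval of length $b-|p-q|>7b/8>1$ and hence contains an integer, and the reverse inclusion $\Lambda^{\vec{v}}(b)+\Lambda^{\vec{w}}(b)\subset \mathbb{Z}^2$ is trivial. You are also right to insist on an overlap of length at least $1$ rather than mere nonemptiness --- that is precisely the step where integrality of $n_1$ enters, and it is the only place a careless argument could fail.
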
 
	
	With the help of the above lemma, we are able to prove following result.
	\begin{thm}	\label{thm12}
		Let  $(X,\mathcal{B}_X,\mu,T)$ be a $\mathbb{Z}^2$-m.p.s. Then the following statements are equivalent.
		\begin{itemize}
			\item[(a)] There exist two directions $\vec{v}=(1,\beta_1),\vec{w}=(1,\beta_2)\in \mb{R}^2$ with $\beta_1\neq \beta_2$ such that $$\mathcal{K}^{\vec{v}}_{\mu}\cap\mathcal{K}^{\vec{w}}_{\mu}=\{X,\emptyset\}.$$
			\item[(b)] $(X,\mathcal{B}_X,\mu,T)$ is  weakly mixing.
		\end{itemize}
		\begin{proof}	
			(b) $\Rightarrow$ (a). Suppose, for the sake of contradiction, that  $\mathcal{K}^{\vec{v}}_{\mu}\cap\mathcal{K}^{\vec{w}}_{\mu}\neq \{X,\emptyset\}.$ Then take $B\in\mathcal{K}^{\vec{v}}_{\mu}\bigcap\mathcal{K}^{\vec{w}}_{\mu}$ with $0<\mu(B)<1$. We will show that $\overline{\{U_T^{(m,n)}1_B:(m,n)\in \mathbb{Z}^2\}}$
			is a compact subset of $L^2(X,\mc{B}_X,\mu)$, which implies that  $(X,\mathcal{B}_X,\mu,T)$ is not weakly mixing. This is a contradiction to (b). In fact, by Lemma \ref{c}, it suffices to prove that $$\mathcal{R}_b:=\overline{\{U_T^{(m,n)}1_B:(m,n)\in \Lambda^{\vec{v}}(b)+\Lambda^{\vec{w}}(b)\}}$$ is a compact subset of $L^2(X,\mathcal{B}_X,\mu)$. So
			$$\mathcal{P}_b:=\overline{\{U_T^{(m,n)}1_B:(m,n)\in \Lambda^{\vec{v}}(b)\}} \text{ and } \mathcal{Q}_b:=\overline{\{U_T^{(m,n)}1_B:(m,n)\in \Lambda^{\vec{w}}(b)\}}$$  are  compact subsets of $L^2(X,\mathcal{B}_X,\mu)$.
			For any $\epsilon>0$, let $$\{(m_i,n_i)\}_{i=1}^s\subset \Lambda^{\vec{v}}(b)\text{ and } \{(u_j,v_j)\}_{j=1}^s\subset \Lambda^{\vec{w}}(b)$$ be $\epsilon/2$-nets of $\mathcal{P}_b$ and $\mathcal{Q}_b$, respectively. Hence for any $(p_1,q_1)\in \Lambda^{\vec{v}}(b)$ and $(p_2,q_2)\in \Lambda^{\vec{w}}(b)$, we conclude
			$$\|U_T^{(p_1,q_1)}1_{B}-U_T^{(m_i,n_i)}1_{B}\|_2<\epsilon/2 \text{ and } \|U_T^{(p_2,q_2)}1_{B}-U_T^{(u_j,v_j)}1_{B}\|_2<\epsilon/2$$
			for some $i,j\in\{1,\ldots,s\}$.
			Therefore
			\begin{equation}\label{5}
				\begin{split}
					&\|U_T^{(p_1+p_2,q_1+q_2)}1_{B}-U_T^{(m_i+u_j,n_i+v_j)}1_{B}\|_2\\
					\leq& \|U_T^{(p_1,q_1)}1_{T^{-(p_2,q_2)}B}-U_T^{(m_i,n_i)}1_{T^{-(p_2,q_2)}B}\|_2\\
					&+\|U_T^{(p_2,q_2)}1_{T^{-(m_i,n_i)}B}-U_T^{(u_j,v_j)}1_{T^{-(m_i,n_i)}B}\|_2\leq \epsilon.
				\end{split}
			\end{equation}
			It follows from \eqref{5} that $$\Theta_b:=\{(m_i+u_j,n_i+v_j):1\leq i,j\leq s\}$$ is a finite $\epsilon$-net of $\mathcal{R}_b$ in $L^2(X,\mathcal{B}_X,\mu)$, which implies that $\mathcal{R}_b$ is a compact subset of $L^2(X,\mathcal{B}_X,\mu)$.

			(a) $\Rightarrow$ (b). If $(X,\mathcal{B}_X,\mu,T)$ is not weakly mixing, then there exists $B\in\mathcal{K}_{\mu}$ with $0<\mu(B)<1$ such that $\overline{\{U_T^{(m,n)}1_B:(m,n)\in \mathbb{Z}^2\}}$ is a compact subset of $L^2(X,\mathcal{B}_X,\mu)$. Since  $\mathcal{P}_b$ and $\mathcal{Q}_b$ are closed subsets of $\overline{\{U_T^{(m,n)}1_B:(m,n)\in \mathbb{Z}^2\}}$, it follows that $\mathcal{P}_b$ and $\mathcal{Q}_b$ are compact subsets of $L^2(X,\mathcal{B}_X,\mu)$, which implies that $\mathcal{K}^{\vec{v}}_{\mu}\cap\mathcal{K}^{\vec{w}}_{\mu}\neq \{X,\emptyset\},$
			a contradiction. Thus, $(X,\mathcal{B}_X,\mu,T)$ is  weakly mixing.
			The proof of this theorem is completed.
		\end{proof}
	\end{thm}

	\section*{Acknowledgement}
	The author would like to thank Wen Huang, Leiye Xu,  Xiangdong Ye and Tao Yu for making many valuable suggestions. Chunlin Liu was partially supported by NNSF of China (12090012, 12031019, 12090010).

	\bibliographystyle{acm}

\end{document}